\newcommand{\E}{\mathop{\mathbb{E}}}
\newcommand{\Pois}{\mathrm{Pois}}
\def\calL{\mathcal{L}}
\def\calD{\mathcal{D}}
\def\N{\mathbb{N}}
\def\R{\mathbb{R}}
\def\ss{{s,s'}}
\newtheorem{lemma}{Lemma}
\newtheorem{definition}{Definition}
\newtheorem{theorem}{Theorem}
\newtheorem{proposition}{Proposition}
\newcommand{\citet}[1]{\citeauthor{#1}~\shortcite{#1}}
\newcommand{\vvspace}[1]{}
\title{Dynamic Car Dispatching and Pricing: Revenue and Fairness for Ridesharing Platforms}
\author{
Zishuo Zhao$^1$\and
Xi Chen$^2$\and
Xuefeng Zhang$^{1}$\And
Yuan Zhou$^3$\footnote{Corresponding Author}\\
\affiliations
$^1$University of Illinois Urbana-Champaign, 
$^2$New York University,\\
$^3$Yau Mathematical Sciences Center, Tsinghua University
\emails
{zishuoz2@illinois.edu},
{xc13@stern.nyu.edu},
{xuefeng8@illinois.edu},
{yuan-zhou@tsinghua.edu.cn}
}
\begin{document}
\maketitle


\begin{abstract}

A major challenge for ridesharing platforms is to guarantee profit and fairness simultaneously, especially in the presence of misaligned incentives of drivers and riders. We focus on the  dispatching-pricing problem to maximize the total revenue while keeping both drivers and riders satisfied. We study the computational complexity of the problem, provide a novel two-phased pricing solution with revenue and fairness guarantees, extend it to stochastic settings and develop a dynamic (a.k.a., learning-while-doing) algorithm that actively collects data to learn the demand distribution during the scheduling process. We also conduct extensive experiments to demonstrate the effectiveness of our algorithms.

\end{abstract}

\vvspace{-3ex}
\section{Introduction}

Ridesharing is a novel form of sharing economy that utilizes mobile apps to match drivers and riders to allow riders to take trips conveniently and make profits for drivers. Compared to traditional taxi platforms, ridesharing platforms enable riders to put orders on the system in advance of the trip for drivers to take, so that the system can optimally plan the rides to make it more efficient. Previous studies on planning algorithms for ridesharing platforms adopt a variety of methodologies including combinatorial optimization \cite{Bei2018AlgorithmsFT}, reinforcement learning \cite{www}, or both \cite{2020Ride}. However, planning trips only in the centralized way does not guarantee that each individual driver and rider has the incentive to obey the plan, which calls for efficient and fair pricing mechanisms so that following the plan will be ``happy'' for each party and maximize their utilities.

The pricing mechanism for taxi platforms depends on distance and waiting time, but it is too simple to either well represent the cost of drivers or match the supply and demand, which may result in dissatisfaction on both sides and lead to refusal of trips. For example, a rider wants to take an important trip with a short distance and a low price. However, there is a traffic jam and it may take a long time for the driver to cover the trip. This situation will create an opportunity cost that discourages the driver to accept the order. Were the charged price higher, the rider would probably not mind the slight increase of cost but the driver will be satisfied to accept the trip, which benefits both parties. However, we should be careful about the price adjustment: if two friends take the same trip, but at different prices, the one who takes the trip with a higher price may ``envy'' the other and will be dissatisfied with the platform. This issue may also apply to the drivers:  if two drivers initially at the same time and location are assigned different trips that earn different profits, the driver with lower profit would also be dissatisfied with the platform. Therefore, to make the platform satisfied by each agent, the algorithm should be ``envy-free'' (as in Definition~\ref{def:ef}). Another important property is ``subgame-perfect Nash equilibrium'', which means that each driver is assigned with a plan, following which he/she can get the best utility among all alternative actions given others' actions are fixed, so that no driver has the incentive to deviate from the plan (as in Definition \ref{def:sp}).


In this paper, we propose a fairness-aware algorithmic framework for dynamic car dispatching and pricing, which consists of the following three-fold contributions:

\vvspace{-1ex}
\begin{enumerate}
    \item We study the computational complexity of the task of dispatching and pricing for total revenue maximization, propose a versatile generalized network flow model for the task, and provide theoretical guarantees (Section~\ref{sec:phase-1-deterministic}).
    \item We propose a novel two-phase pricing mechanism that decouples and sets different prices on drivers' and riders' sides, which can adapt to situations where the  drivers' and riders' interests misalign\footnote{Please see the illustrative example in Appendix \ref{appendix:example_two_phase}.} and guarantee fairness for both parties (Section~\ref{sec:phase-2-deterministic}).
    \item We consider the stochastic nature of ridesharing orders and study the online learning setting. We natural extend the model to the stochastic setting (Section~\ref{sec:stochastic-setting}), enabling the use of Thompson sampling-based algorithm to learn the valuation distributions from the partial information given by the riders' responses, and balance the exploration-exploitation trade-off (Appendix~\ref{sec:online-learning}).
\end{enumerate}
\vvspace{-1ex}
Finally, in Section~\ref{sec:experiments}, we perform extensive experimental evaluations of our assumptions and algorithms in the real-world datasets and demonstrate the effectiveness of our methods.

We have also shown that our algorithm runs in polynomial time. Please refer to Section~\ref{app:complexity} for detailed complexity analysis.

\vvspace{-1ex}
\paragraph{Related Works.} There are several related works in the existing literature. \citet{Bei2018AlgorithmsFT,2020Ride,wang2018deep} study how to dispatch the drivers efficiently in a centralized way, and 
\citet{hrnvcivr2015ridesharing,www} 
study the dispatching problem via multiagent systems, but they do not consider pricing which is essential for the application in platforms. \citet{queue_pricing} study optimal pricing via queue theory, but they assume a single location, which is too simple for application. \citet{wild_goose} discuss the phenomenon of ``wild goose chase'' in which drivers spent most time driving to take a distant order in unbalanced supply and demand, and propose the method of adjusting price to avoid its detriment to efficiency, but they do not consider fairness. \citet{2019Spatial} look into the effects of pricing to supply-demand balance, revenue and consumers' surplus, but adopt an over-simplified model of $n$ pairwise equidistant locations, which is not even geometrically possible for large $n$. 
\citet{yan2018dynamic} also provide an algorithm for dynamic matching and pricing, but the matching and pricing algorithms are decoupled, making the performance suboptimal.
In particular,  the very recent work \cite{pricing1}, which shares a similar motivation as our work, studies how to maximize social welfare, i.e. the summation of riders' valuations minus drivers' costs among all trips, via an bidding-based dispatching and pricing algorithm.  
In that paper, each rider should bid a maximally acceptable price for them, and the truthful mechanism guarantees that it is in each rider's interest to report their true valuation. However, there are some gaps from their mechanism to the reality. First of all,  it is not practical for riders to bid their valuation like an auction. Second, the mechanism maximizes total social welfare, not drivers' revenue, but ride-sharing platforms are indeed interested in their profits.  Also, it assumes that all future orders is known at the beginning, which is not realistic. In contrast, our algorithm optimizes the total revenue via dynamically learning the order distribution from the riders' responses on our carefully designed prices.




\vvspace{-1.5ex}
\section{Preliminaries}
\vvspace{-1ex}
We assume the service zone is divided into a family $L$ of discrete locations, and the planning horizon is a family $T$ of discrete time slots. Therefore, there are $|L| \cdot |T|$ spatiotemporal \emph{states}, denoted by $S = L\times T$.

We also assume that the travelling time from one state $s = (l, t) \in S$ to another location $s'$ is deterministically defined by the known function $\delta(l, l', t) \in \mathbb{Z}^+$. We call each pair of the spatiotemporal states $(s, s')$ a spatiotemporal \emph{arc}. For each $s = (l, t)$ and $s' = (l', t')$, we say the arc $(s, s')$ is \emph{admissible} if $t' \geq t + \delta(l, l', t)$. We denote by $Q$ the set of all admissible arcs.

 


Each admissible arc $(s, s') \in Q$ is associated with a known deterministic cost $c(s, s')$ which is incurred to any driver that drives along this arc. The order of the $i$-th rider is described by an admissible arc $(s_i, s_i') \in Q$ and a valuation $v_i$ which is the maximum amount the rider would like to pay for the ride. Since $v_i$ is not revealed to the ridesharing platform, we call $o_i = (s_i, s_i', v_i)$ the $i$-th \emph{latent order}, and denote $R = \{o_i, \forall i\}$ the set of latent orders.


The task of the scheduling algorithm for the ride-sharing platform involves the decision of a rider-side pricing function $p: S\times S \to [0,+\infty)$ (which has to be independent of the rider to ensure envy-freeness). For each rider $i$ with latent order $o_i=(s_i, s_i', v_i)$, the scheduling algorithm offers the price $p(s_i, s_i')$.
The rider only accepts the offer if $v_i \geq p(s_i, s_i')$ in which case the platform receives $p(s_i, s_i')$ as income. Serving the order also incurs the driver cost according to $c(\cdot, \cdot)$ along the arcs. After all trips, the drivers will leave the platform. 

The first goal of the scheduling algorithm is to maximize the total revenue which is defined to be the total income (collected from the riders) minus the total cost (incurred by the drivers). Then, the second goal of the scheduling algorithm is to compute the driver-side payment function $y: S\times S \to [0,+\infty)$ to distribute the income to the drivers in a subgame-perfect and envy-free manner. (Note that the payment function also has to be independent of the drivers to ensure envy-freeness.)



The above-described scheduling problem involves the complex optimization of multiple sets of decision variables. The unknown latent order set introduces further challenges to the task. To approach this complex problem, we will first consider the deterministic setting where the latent order set $R$ is fully revealed to the scheduling algorithm, and the scheduling problem becomes a pure static optimization task. Then, we assume that $R$ is drawn from a latent distribution, and design an online learning algorithm that simultaneously learns the latent distribution and optimizes the total revenue.


In the following two sections, we describe each phase of the problem with more details and mathematical rigor, and propose our algorithms to achieve the optimal policy.

\vvspace{-1.5ex}

\section{Phase 1 of the Deterministic Setting: Maximum Revenue Car Dispatching} \label{sec:phase-1-deterministic}

\vvspace{-1ex}

In this section, we introduce our algorithm to the maximum revenue car dispatching problem in the deterministic setting (i.e., when the set of latent orders $R$ is known to the platform).  For convenience, we first introduce the following non-linearly weighted circulation (NLWC) problem, and the maximum revenue car dispatching problem can be formulated based on NLWC definition.




\vvspace{-0.5ex}
\begin{definition}\label{def:nlwc}
In the \emph{non-linearly weighted circulation (NLWC)} problem, there is a directed graph $G = (V, E)$. For each directed edge $e \in E$, we associate it with the \emph{flow lower bound} $\ell(e)$, the \emph{flow upper bound} $u(e)$ and the \emph{reward function} $r(\cdot ; e) : \mathbb{N} \to \mathbb{R}$. The goal is to find a flow $f : E \to \mathbb{N}$ so that $f$ satisfies the flow upper and lower bounds (i.e., $\ell(e) \leq f(e) \leq u(e), \forall e \in E$) and flow conservation (i.e., $\sum_{e \text{~going out of~} s} f(e) = \sum_{e \text{~going into~} s} f(e), \forall s \in V$), and the total reward $\sum_{e \in E} r(f(e); e)$ is maximized. 
\end{definition}
\vvspace{-0.5ex}
Observe that when the reward functions are linear (i.e., $r(x; e) = w(e) \cdot x$), the NLWC problem becomes the canonical minimum cost circulation problem, which admits a polynomial time algorithm \cite{tardos1985strongly} (with the signs of the linear coefficients flipped). 

With the formulation of the NLWC problem in place, we are ready  to describe our maximum revenue car dispatching problem in the deterministic setting. Here, we assume that the platform knows all the riders' information; i.e., for each rider $i$, we know that his/her latent order $o_i = (s_i, s_i', v_i)$. Based on this information, for each arc $(s,s')\in Q$, we calculate the number of latent orders following the arc and denote it by $o(s,s')$; then, for each $1\le i \le o(s,s')$, we define $v_i(s,s')$ to be the $i$-th largest valuation among all latent orders following $(s, s')$. Note that if the platform plans to accept $k$ orders on the arc $(s, s')$, to maximize the income, the price should be set as $p(s,s') = v_k(s,s')$, and the total income generated from this arc becomes $k \cdot v_k(s,s')$. 

In light of the discussion above, we will construct a directed graph $(V_0, E_0)$ so that the maximum revenue car dispatching problem becomes calculating NLWC on the graph, where the flow along each arc indicates the number of drivers the platform plans to dispatch. 

We first let the vertex set $V_0=S\cup \{I,O\}$ where $I$ is the artificial source and $O$ is the artificial sink; together, a directed edge $e_{O,I}$ that goes from $O$ to $I$ is set up with $\ell(e_{O,I}) = 0$ flow lower bound and $u(e_{O,I}) = +\infty$ flow upper bound and the constant-zero reward function: $r(\cdot; e_{O,I}) \equiv 0$. We then set up the following sets of edges.
\begin{itemize}
\item (\textbf{Initialize drivers.}) For each spatiotemporal state $s$ with $n_s$ initial drivers, we set up a directed edge $e_{I,s}$ going from $I$ to $s$ with both flow upper and lower bounds equal to $\ell(e_{I,s}) = u(e_{I,s}) = n_s$, and the constant-zero reward function $r(\cdot ; e_{I,s}) \equiv 0$. The flow $f(e_{I, s})$ represents the number of drivers to start working from the state $s$.

\item (\textbf{Leaving drivers.}) For any spatiotemporal state $s$, we set up a directed edge $e_{s,O}$ going from $s$ to $O$ with $\ell(e_{s,O}) = 0$ lower bound, $u(e_{s,O}) = +\infty$ upper bound, and the 
constant-zero
reward function $r(\cdot ; e_{s,O}) \equiv 0$. The flow $f(e_{s, O})$ represents the number of drivers to leave the system at the state $s$.

\item (\textbf{Driving without a rider.}) For any admissible arc $(s, s') \in Q$, we set up a directed edge $e^{(\mathrm{o})}_{s,s'}$ going from $s$ to $s'$ with $\ell(e^{(\mathrm{o})}_{s,s'}) = 0$ lower bound, $u(e^{(\mathrm{o})}_{s,s'}) = +\infty$ upper bound.
The flow $f = f(e^{(\mathrm{o})}_{s,s'})$ represents the number of drivers to drive through the arc $(s, s')$ without carrying a rider. Therefore, we set up the linear reward function $r(f; e^{(\mathrm{o})}_{s,s'}) = -c(s, s') \cdot f$.

\item (\textbf{Driving with a rider.}) For any admissible arc $(s, s') \in Q$, we set up a directed edge $e^{(\mathrm{w})}_{s,s'}$ going from $s$ to $s'$ with $\ell(e^{(\mathrm{w})}_{s,s'}) = 0$ lower bound, $u(e^{(\mathrm{w})}_{s,s'}) = o(s, s')$ upper bound. The flow $f = f(e^{(\mathrm{w})}_{s,s'})$ represents the number of drivers to drive through the arc $(s, s')$ with a rider. Therefore, we define the non-linear reward function $r(f; e^{(\mathrm{w})}_{s,s'}) = [v_f(s, s') - c(s, s')] \cdot f$.
\end{itemize}

Given $(V_0, E_0)$, the \emph{\underline{maximum revenue car dispatching}} problem in the deterministic setting is equivalent to finding the optimal solution to NLWC on the directed graph $(V_0, E_0)$. Formally, we directly have the proposition below.

\begin{proposition} \label{prop:mrcd}
Let $f^*$ be the optimal solution to NLWC on the directed graph $(V_0, E_0)$. To achieve the maximum revenue in the car dispatching task, the platform may direct the drivers to drive with/without carrying a rider or leave the platform based on the flow value on the corresponding sets of edges. The total weight of $f^*$ is the maximum revenue the platform may collect.
\end{proposition}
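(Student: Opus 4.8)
The plan is to establish a revenue-preserving bijection between feasible dispatching plans and feasible integral circulations on $(V_0, E_0)$, from which the proposition follows immediately. First I would formalize a \emph{dispatching plan} as an assignment of each of the $\sum_s n_s$ initial drivers to a route: a driver starting at state $s$ either leaves immediately or traverses a sequence of admissible arcs (each arc taken either carrying one rider or empty) until leaving at some final state. The forward-in-time admissibility condition $t' \geq t + \delta(l,l',t)$ guarantees that every arc edge strictly advances time, so the subgraph induced on $S$ is a DAG and the only edge that can close a cycle is $e_{O,I}$; this structural observation will make the later flow decomposition clean.

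Given a dispatching plan, I would construct a flow $f$ by setting $f(e_{I,s}) = n_s$, letting $f(e^{(\mathrm{w})}_{s,s'})$ and $f(e^{(\mathrm{o})}_{s,s'})$ count the drivers traversing $(s,s')$ with and without a rider respectively, setting $f(e_{s,O})$ to the number of drivers leaving at $s$, and $f(e_{O,I}) = \sum_s n_s$. The next step is to verify feasibility: flow conservation at each state $s$ is exactly the bookkeeping identity that the number of drivers arriving at $s$ (via incoming arcs or initialization) equals the number departing (via outgoing arcs or leaving), conservation at $I$ and $O$ holds by the choice of $f(e_{O,I})$, and all capacity bounds are met --- in particular $f(e^{(\mathrm{w})}_{s,s'}) \leq o(s,s')$ because at most $o(s,s')$ riders request that arc.

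Next I would match revenue with total weight. For an arc carrying $k = f(e^{(\mathrm{w})}_{s,s'})$ riders, setting the common price $p(s,s') = v_k(s,s')$ is optimal and induces precisely the $k$ highest-valuation riders on that arc to accept (since their valuations are at least $v_k(s,s')$), yielding income $k \cdot v_k(s,s')$ and driver cost $k \cdot c(s,s')$, for a net contribution $[v_f(s,s') - c(s,s')] \cdot f$ equal to the reward $r(f; e^{(\mathrm{w})}_{s,s'})$; the empty-driving edges contribute $-c(s,s') \cdot f$, again matching their reward, and all zero-reward edges contribute nothing. Summing over edges shows the plan's revenue equals the total weight of $f$.

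For the converse direction I would take any feasible integral circulation and decompose it into driver routes. Because the state-induced subgraph is a DAG, removing the return edge $e_{O,I}$ leaves an acyclic flow of value $\sum_s n_s$ from $I$ to $O$, which a standard integral flow decomposition expresses as a union of $\sum_s n_s$ simple $I$-to-$O$ paths, each reading off as one driver's route; the initialization constraints $f(e_{I,s}) = n_s$ guarantee the right number of routes originate at each $s$, and the per-arc rider counts are realized by assigning the top-valuation riders as above. I expect the main obstacle to be stating this decomposition cleanly --- specifically, arguing that integrality and acyclicity let us interpret every unit of flow as a genuine, consistently routed driver, and that the reward-to-revenue identity holds in this reverse direction as well. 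Once the bijection and weight equality are in place, optimality of $f^*$ is equivalent to maximum revenue, which is the claim.
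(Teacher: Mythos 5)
Your proposal is correct and matches the paper's intent: the paper states Proposition~\ref{prop:mrcd} as following ``directly'' from the construction of $(V_0, E_0)$ and gives no explicit proof, and your revenue-preserving correspondence between dispatching plans and integral circulations (using the DAG structure for flow decomposition, and the observation that with $f(e^{(\mathrm{w})}_{s,s'})=k$ served riders the optimal price $v_k(s,s')$ makes edge reward equal arc revenue) is exactly the formalization of that claim. No gaps; the empty-versus-carrying split of drivers is correctly delegated to the $e^{(\mathrm{o})}$/$e^{(\mathrm{w})}$ edges, which is the one place such an argument could go wrong.
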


Proposition~\ref{prop:mrcd} also enables us to design the \emph{routing plan} for each individual driver based on the NLWC solution. Formally, a \emph{route} $A = (a_1,a_2,\dots,a_z)$ is a sequence of spatiotemporal arcs such that the ending state of each arc $a_i$ is the same as the beginning state of the next arc $a_{i+1}$ (for all $i \in \{1, 2, \dots, z - 1\}$). At each time step and for each driver $q$, the \emph{routing plan} $A_q$ is just a route which starts at the driver's current state.

While the general NLWC problem is computationally intractable, the maximum revenue car dispatching problem, as a special case of NLWC, is unfortunately not easier. Formally, we present the following negative result for the maximum revenue car dispatching problem. The proof of Theorem~\ref{thm:nphard} is deferred to Appendix~{\ref{appendix:proof:nphard}}. Note that since maximum revenue car dispatching is a special case, we may not directly use the NP-Hardness proof of NLWC, and have to design a new hardness instance instead.

\begin{theorem}
The maximum revenue car dispatching problem, even in the deterministic setting, is NP-hard.\label{thm:nphard}
\end{theorem}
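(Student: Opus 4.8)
The plan is to reduce a known NP-hard problem to the decision version of maximum revenue car dispatching (``is there a dispatch achieving total revenue at least $\Theta$?''), building the hardness instance directly in the spatiotemporal model rather than inheriting it from general NLWC. The lever I would exploit is the single-price externality baked into the reward $r(f; e^{(\mathrm{w})}_{s,s'}) = [v_f(s,s') - c(s,s')]\cdot f$: because the platform must post one price per arc, serving the $k$-th rider drags the price down to the $k$-th largest valuation $v_k$, so $R(f) := f\,v_f$ is typically non-monotone. Concretely, an arc carrying two orders with valuations $v_1 = H \gg v_2 = L$ and cost $0$ gives $R(1) = H$ but $R(2) = 2L < H$, so the second driver has \emph{negative} marginal reward. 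This congestion-like penalty --- flooding a high-value arc forces a low single price and destroys revenue --- is the combinatorial tension around which I would design the instance, reducing from a constraint-satisfaction or disjoint-routing problem (e.g., $3$-SAT or edge-disjoint paths).

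I would realize a time-expanded instance in which each driver's route encodes a discrete decision of the source problem, and the ``flooding'' arcs act as constraints. For choice gadgets, I route the fixed driver supply (recall the initialization edges fix $f(e_{I,s}) = n_s$) through a gadget offering two admissible continuations; although flow conservation alone permits splitting, the downstream penalties will make a pure, all-one-way choice optimal, and that is what the decoding relies on. For constraint gadgets, I place high-value two-order arcs of the $H \gg L$ type at the shared states where conflicting choices would collide: any routing that violates a constraint must send at least two drivers through such an arc, collapsing its contribution from $H$ to $2L$ and forfeiting a known gap. The driving-without-rider edges, whose reward $-c(s,s')\cdot f$ I can tune, would be used to fence drivers onto the intended paths. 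Summing the per-gadget optima yields a threshold $\Theta$ attainable exactly when all constraints can be satisfied at once, i.e., when the source instance is a yes-instance.

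The hard part is that the realizable per-arc reward family is quite \emph{tame}: since $v_1 \ge v_2 \ge \cdots \ge 0$, the function $R(f) = (v_f - c)f$ has nonincreasing average $v_f - c$ and marginals bounded above by $v_1$, so one cannot manufacture fixed-charge or increasing-returns (all-or-nothing) gadgets --- every gadget must be assembled from the negative-externality effect above. Consequently the delicate direction is \emph{soundness}: I must show that \emph{any} integral circulation reaching $\Theta$ decodes to a genuine solution, ruling out ``cheating'' routings that recover the threshold by partially mixing choices or by diverting drivers to unintended arcs. I would handle this by choosing the magnitudes ($H$, $L$, the repositioning costs $c(s,s')$, and the supplies $n_s$) with a separation large enough that every deviation from an intended, constraint-respecting routing forfeits strictly more reward than any alternative arc can recoup, making the intended routings the unique optima; the completeness direction (yes-instance $\Rightarrow$ revenue $\ge \Theta$) is then immediate by exhibiting the encoding routing. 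This establishes Theorem~\ref{thm:nphard}. I expect the bookkeeping that certifies this separation --- and thereby closes the soundness gap within the tame reward family --- to be the main obstacle; membership in NP is otherwise clear, since the revenue of a given routing is computable in polynomial time, though only hardness is claimed.
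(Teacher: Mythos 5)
There is a genuine gap, and it sits at the heart of your construction: the ``flooding'' penalty you build from an arc carrying two orders with $H \gg L$ does not exist in this model. The platform is never forced to serve every rider on an arc its drivers traverse: alongside each with-rider edge $e^{(\mathrm{w})}_{s,s'}$ (capacity $o(s,s')$) there is the empty-driving edge $e^{(\mathrm{o})}_{s,s'}$ with \emph{infinite} capacity and the \emph{same} per-driver cost $c(s,s')$. Hence if $x$ drivers traverse an arc with valuations $v_1 \ge \dots \ge v_m$, the platform's best achievable reward is $\max_{0 \le k \le \min(x,m)} k\,v_k - x\,c(s,s')$, not $x\,v_x - x\,c(s,s')$: it serves only the profitable prefix of riders and routes the rest empty. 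In your gadget with $c = 0$, two colliding drivers therefore still yield $H$ (one carries the high-value rider, the other drives empty), not $2L$; the claimed collapse never happens. More generally, the decision-relevant reward of an arc, viewed as a function of the number of drivers on it, is automatically ``ironed'' to be monotone, and the penalty for an extra driver is capped at $c(s,s')$ --- a cost you cannot raise selectively against colliders, since the same $c(s,s')$ is charged to the intended rider-carrying traversals. So the soundness direction cannot be rescued by tuning $H$, $L$, and the costs: negative-externality gadgets are structurally unavailable in this problem.

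Moreover, the structural claim you use to justify this route --- that increasing-returns / all-or-nothing gadgets cannot be manufactured --- is exactly backwards, and reversing it is how the paper's proof works. Non-increasing \emph{averages} $v_k$ do not force non-increasing \emph{marginals}: taking valuations $v_k = \frac{n-1}{k}A$ for $k \le n-1$ and $v_n = A$ gives $k\,v_k = (n-1)A$ for every $k \le n-1$ and $n\,v_n = nA$, i.e.\ a reward that is flat and then jumps by $A$ exactly at saturation --- a bona fide threshold gadget (this is precisely what violating the regularity condition of Definition~\ref{def:regularity} buys). The paper composes such threshold arcs into ``virtual arcs'' realizing $r(x+(n-1)) = A\cdot\max\{0,x-1\} + C_2$ and reduces from Set Cover: each element-driver chooses which set-node to visit, each set-node's virtual arc pays a bonus for every visiting driver beyond the first, and the optimal revenue works out to $2n + C_3 - k$ where $k$ is the number of distinct sets visited, i.e.\ the minimum set-cover size. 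The asymmetry that dooms your plan and saves theirs: downward deviations (serving fewer riders) are always available to the platform, so dips in $f\,v_f$ can never be weaponized as penalties, whereas upward jumps at saturation are rewards the platform can only earn by actually routing enough riders-carrying drivers --- positive gadgets survive the ironing, negative ones do not.
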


On the positive side, we propose a natural \emph{regularity condition} in Definition~\ref{def:regularity}. We will show that when the condition is satisfied, the maximum revenue car dispatching problem can be solved in polynomial time.


\begin{definition}[Regularity]\label{def:regularity}
We say that a maximum revenue car dispatching problem instance satisfies the \emph{regularity condition} if for each admissible spatiotemporal arc $(s,s')$, and each $k \in \{1, 2, \dots, o(s,s')\}$, the sequence $v'_k(s,s')$ is monotonically non-increasing with $k$, where we define
\begin{small}
\begin{align*}
v'_k(s,s') := \left\{
\begin{array}{ll}
     v_1(s, s') & (k = 1)\\
     k \cdot  v_k(s,s')-(k-1)v_{k-1}(s,s') & (k\geq 2)
\end{array}
\right. .
\end{align*}
\end{small}
\end{definition}

In the definition, $v'_k(s,s')$ can be interpreted as the \emph{marginal reward} of accepting the $k$-th highest price order on arc $(s,s')$. The regularity condition then requires that the marginal reward sequence is not increasing with the increasing number of accepted orders on any arc, which is a standard assumption in economics literature (see, e.g., \cite{marginal1,marginal2,marginal3}). Indeed, in our empirical evaluation, we verify that the regularity condition holds in the real-world data. 

We now present our edge decomposition algorithm (details in Algorithm~\ref{alg:main_algo}) for the maximum revenue car dispatching problem. At a higher level, Algorithm~\ref{alg:main_algo} first manages to decompose each non-linear directed edge in $(V_0, E_0)$ to a family of edges with linear costs and creates a minimum linear-cost circulation problem instance $(V_0, \tilde{E}, \tilde{\ell}, \tilde{u}, -\tilde{w})$, then invokes the existing polynomial-time time algorithm for the minimum linear-cost circulation problem, and finally aggregates the flows in each family to construct the optimal solution to the original problem. 

\setlength{\textfloatsep}{8pt}
\begin{algorithm}[htb]
\caption{The Edge Decomposition Algorithm}
\label{alg:main_algo}
\begin{algorithmic}[1]
\State Construct the NLWC instance $(V_0,E_0,\ell,u,r)$;

\State $E_1 \leftarrow \{e_{s,s'}^{(\mathrm{w})}\in E_0\}$, $E_2 \leftarrow E_0 - E_1$; $\tilde{E} \leftarrow \emptyset$;

\For {$e_{s,s'}^{(\mathrm{w})}\in E_1$} \label{line:alg-main-3}
    \For {$i\in \{1, 2, \dots, o(s,s')\}$}
        \State $\tilde{E} \leftarrow \tilde{E} \cup e_{s,s'}^{(\mathrm{w},i)}$; $(\tilde{\ell} (e_{s,s'}^{(\mathrm{w},i)}), \tilde{u} (e_{s,s'}^{(\mathrm{w},i)})) \leftarrow (0,1)$;

        \State $w(e_{s,s'}^{(\mathrm{w},i)})
        \leftarrow r(i;e_{s,s'}^{(\mathrm{w})}) - r(i-1;e_{s,s'}^{(\mathrm{w})})$
    \EndFor
\EndFor \label{line:alg-main-8}

\For {$e\in E_2$}
    \State $\tilde{E} \leftarrow \tilde{E} \cup e$; $\left(\tilde{\ell} (e),\tilde{u} (e)\right) \leftarrow (\ell(e),u(e))$;
    \State $\tilde{w}(e) \leftarrow \left\{
\begin{array}{ll}
     -c(s, s') & (\text{if both~}s,s'\in S)\\
     0 & (\text{otherwise})
\end{array}\right.$;
\EndFor

\State Invoke the polynomial-time algorithm \cite{tardos1985strongly} to compute the minimum cost circulation of $(V_0,\tilde{E}, \tilde{\ell}, \tilde{u}, -\tilde{w})$ where $-\tilde{w}$ is the coefficient function of the linear costs, denote the optimal flow  by $\tilde{f}$;

\For{$e \in E_0$}
\State {\bf if} $e = e_{s,s'}^{(\mathrm{w})} \in E_1$ {\bf then} $f(e) \leftarrow \sum_{i} \tilde{f}(e_{s,s'}^{(\mathrm{w},i)})$;
\State ~~~~~~~~~~~~~~~~~~~~~~~~~~~~{\bf else} $f(e) \leftarrow \tilde{f}(e)$;
\EndFor

\State {\bf return} $f$;
\end{algorithmic}
\vvspace{-1ex}
\end{algorithm}

In Algorithm~\ref{alg:main_algo}, the edge set $E_1$ denotes the edges corresponding to ``driving with a rider'' and  $E_2$  the rest of the edges. We also observe that the only non-linear edges are the ones to drive with a rider (in $E_1$), while the rest edges (in $E_2$) already have linear costs. For the edges in $E_1$, the algorithm decomposes them from Line~\ref{line:alg-main-3} to Line~\ref{line:alg-main-8}: since the flow on each edge in $E_1$ represents the amount of the rider orders accepted along the corresponding spatiotemporal arc, the algorithm assigns each decomposed edge with unitary capacity, and the corresponding flow represents an additional order to be accepted along the arc, and naturally the weight function is defined based on the marginal reward function $v_k'(\cdot, \cdot)$. Also note that the algorithm always returns an integral flow because of the integrality property of the minimum linear-cost circulation problem. Regarding the theoretical guarantee of Algorithm~\ref{alg:main_algo}, we prove the following theorem:

\begin{theorem} \label{thm:reg_equiv}
Algorithm~\ref{alg:main_algo} runs in polynomial time, and when the regularity condition is met, the returned flow $f$ achieves the maximum revenue of the car dispatching problem on the directed graph $(V_0, E_0)$.
\end{theorem}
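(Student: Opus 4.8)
The plan is to split Theorem~\ref{thm:reg_equiv} into a routine complexity count and an optimality argument that sandwiches the NLWC objective on $(V_0, E_0)$ between itself and the linear-cost circulation objective on the decomposed graph $(V_0, \tilde E)$, with the regularity condition entering at exactly one step. For the running time, first I would bound $|\tilde E|$: every edge of $E_2$ yields one edge of $\tilde E$, and each ``driving with a rider'' edge $e^{(\mathrm{w})}_{s,s'}$ is split into $o(s,s')$ unit-capacity copies; since each latent order lies on exactly one arc, $\sum_{(s,s')} o(s,s') = |R|$, so $|\tilde E| = O(|E_0| + |R|)$ is polynomial in the input size (replacing the $+\infty$ upper bounds by the finite bound $\sum_s n_s$ leaves the optimum unchanged). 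The construction, the call to the strongly polynomial algorithm of \cite{tardos1985strongly}, and the final flow aggregation are each polynomial, which settles the complexity claim.

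For correctness I would first record two facts. (i) Telescoping: the marginal weight is $w(e^{(\mathrm{w},i)}_{s,s'}) = r(i;e^{(\mathrm{w})}_{s,s'}) - r(i-1;e^{(\mathrm{w})}_{s,s'}) = v'_i(s,s') - c(s,s')$, hence $\sum_{i=1}^{k} w(e^{(\mathrm{w},i)}_{s,s'}) = r(k; e^{(\mathrm{w})}_{s,s'})$. (ii) Feasibility: the aggregation $f(e^{(\mathrm{w})}_{s,s'}) = \sum_i \tilde f(e^{(\mathrm{w},i)}_{s,s'})$ together with $f = \tilde f$ on $E_2$ maps feasible circulations of $(V_0, \tilde E)$ to feasible circulations of $(V_0, E_0)$, because flow conservation is preserved (the parallel copies share endpoints) and $0 \le \sum_i \tilde f(e^{(\mathrm{w},i)}_{s,s'}) \le o(s,s')$; conversely, the ``lift'' that routes one unit on each of the first $k$ copies turns any feasible NLWC flow with $f(e^{(\mathrm{w})}_{s,s'}) = k$ into a feasible decomposed flow of the same objective, by (i). Lifting the NLWC optimum then gives $\mathrm{OPT}_{\mathrm L} \ge \mathrm{OPT}_{\mathrm N}$ unconditionally, where $\mathrm{OPT}_{\mathrm N}$ and $\mathrm{OPT}_{\mathrm L}$ denote the optimal NLWC and decomposed objectives.

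The reverse inequality is where regularity is indispensable. Let $\tilde f$ be the returned integral optimum and $f$ its aggregation. On $E_2$ the two objectives agree term-by-term, while on each arc, with $k = f(e^{(\mathrm{w})}_{s,s'})$ and each $\tilde f(e^{(\mathrm{w},i)}_{s,s'}) \in \{0,1\}$, I would bound $\sum_i w(e^{(\mathrm{w},i)}_{s,s'})\, \tilde f(e^{(\mathrm{w},i)}_{s,s'})$ by the sum of the $k$ largest copy-weights. Regularity makes $v'_i(s,s')$, hence every $w(e^{(\mathrm{w},i)}_{s,s'})$, non-increasing in $i$, so those $k$ largest weights are precisely the first $k$, whose sum is $r(k; e^{(\mathrm{w})}_{s,s'})$ by (i). Summing over arcs gives $\mathrm{OPT}_{\mathrm L} = \text{obj}_{\mathrm L}(\tilde f) \le \text{obj}_{\mathrm N}(f) \le \mathrm{OPT}_{\mathrm N}$; chaining this with $\mathrm{OPT}_{\mathrm L} \ge \mathrm{OPT}_{\mathrm N}$ forces every inequality to be an equality, so $f$ attains $\mathrm{OPT}_{\mathrm N}$ and, by Proposition~\ref{prop:mrcd}, the maximum revenue.

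The hard part is this last per-arc bound, or rather seeing that it is tight exactly when the copies are sorted: the telescoping identity always gives $\sum_{i\le k} w(e^{(\mathrm{w},i)}_{s,s'}) = r(k;\cdot)$, but without regularity the $k$ largest weights need not be the first $k$, so the linear relaxation could profit by serving a cheaper order while skipping a more expensive one, making $\mathrm{OPT}_{\mathrm L} > \mathrm{OPT}_{\mathrm N}$ and the aggregation suboptimal --- which is consistent with the NP-hardness of the unrestricted problem (Theorem~\ref{thm:nphard}). Everything else, namely feasibility preservation and the edge count, is routine.
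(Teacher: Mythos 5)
Your proposal is correct and follows essentially the same route as the paper: the identical unit-capacity edge decomposition with telescoping marginal weights, with regularity invoked at exactly the same point, namely to argue that routing $k$ units through the copies of an arc optimally uses the first $k$ copies and yields reward $r(k; e^{(\mathrm{w})}_{s,s'})$, so the linear circulation instance exactly realizes the NLWC objective, and integrality comes from the minimum-cost circulation structure. Your two-sided sandwich of $\mathrm{OPT}_{\mathrm{L}}$ and $\mathrm{OPT}_{\mathrm{N}}$ (the lift direction unconditional, the aggregation direction using the sortedness of the marginals) is simply a more careful write-up of the paper's one-line claim that saturating $e^{(\mathrm{w},1)}_{s,s'},\dots,e^{(\mathrm{w},t)}_{s,s'}$ is optimal and matches $r(t;\cdot)$, together with the same polynomial edge-count observation.
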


\begin{proof}
We only need to prove that in the NLWC problem with regularity, each non-linear edge in $E_1$ with finite capacity can be substituted by a finite number of linear edges.

Consider an edge $e = e_\ss^{(\mathrm{w})}\in E_1$, then $\ell(e)=0$. Then, for each $i\in N$ s.t. $1 \le i\le u(e)$, we add to $\tilde{E}$ an linear edge $e_i(s,s',0,1,w(i)-w(i-1))$. Since $r(i;e_{s,s'}^{(\mathrm{w})}) - r(i-1;e_{s,s'}^{(\mathrm{w})})$ decreases with $i$, when we should put $t$ amount of flow from $s,s'$ in $G_1$, the optimal plan is to saturate edges $e_\ss^{(\mathrm{w},1)},e_\ss^{(\mathrm{w},2)},\cdots,e_\ss^{(\mathrm{w},t)}$, with total reward $r(t;e_{s,s'}^{(\mathrm{w})})$, identical to the NLWC model.

Therefore, we realize the same edge-reward function as the NLWC model with a minimum cost circulation model. While the Maximum Revenue Car Dispatching problem needs integer solutions, from the total unimodularity property of the minimum cost circulation problem, it is guaranteed that our algorithm outputs an integer basic solution. Therefore, we can indeed solve regular Maximum Revenue Car Dispatching via the minimum cost circulation problem.
\end{proof}

We also remark that even when in the general scenario (without the regularity condition), a simple variation of Algorithm~\ref{alg:main_algo} also serves as a good approximation to the optimal solution. It virtually approximates the edge reward function by its concave envelope to ``iron'' it to a concave function \cite{chawla2007algorithmic}. Please refer to Appendix~{\ref{appendix:algo:non_regular}} for details.

\vvspace{-1.5ex}

\section{Phase 2 of the Deterministic Setting: Fair Reward Re-allocation to Drivers} \label{sec:phase-2-deterministic}

Recall that in Phase 1 we have found the maximum revenue that can be achieved by any dispatching plan in the deterministic setting. Along the way, we have also figured out how many drivers are needed for a spatiotemporal arc $(s, s') \in Q$ with a rider (namely $f(e_{s,s'}^{(\mathrm{w})})$) and without carrying a rider (namely $f(e_{s,s'}^{(\mathrm{o})})$). For convenience, we define $F(s, s') := f(e_{s,s'}^{(\mathrm{w})}) + f(e_{s,s'}^{(\mathrm{o})})$ to be the total number of drivers we plan to dispatch along the arc $(s, s')$. In this section, we develop methods to figure out the fair payment scheme $y : S \times S \to [0, +\infty)$ for driving along each spatiotemporal arc to ensure that the drivers are well incentivized to cooperate with the platform and execute the optimal-revenue dispatching plan. 
Formally,
we define the fairness condition as follows.



\begin{definition}[Fair re-allocation]\label{def:fair}
A re-allocation scheme is \emph{fair} if and only if following conditions are satisfied:
\begin{itemize}
\item \underline{Budget-balance.} Let $\mathcal{I}$ be the total income collected from the riders. This should also be the exact amount to be distributed to the drivers.\footnote{We omit the amount that the platform would like to keep for profit, which can be easily added to the constraint w.l.o.g.} Formally, it is required that
$\sum_{(s, s') \in Q} y(s, s') \cdot F(s, s') = \mathcal{I}$.
\item \underline{Individual-rationality.} For each arc driven, the payment should be at least the cost; i.e., for each $(s, s') \in Q$ so that $F(s, s') > 0$, it is required that $y(s, s') \geq c(s, s')$.
\item \underline{Subgame-perfectness.} This is formally defined soon in Definition~\ref{def:sp} which, together with the non-negative producer surplus condition, makes sure that the drivers do not have the incentive to refuse and deviate from the dispatching plan.
\item \underline{Envy-freeness.} This is formally defined in Definition~\ref{def:ef} which makes sure that the drivers do not complain that the dispatching plan is more favorable to others than themselves. 
\end{itemize}
\end{definition}

Note that we need to define subgame-perfectness and envy-freeness in details. Before doing this, we need to introduce a few new notations and definitions.

We will model the drivers' behavior as an \emph{extensive game} \cite{glazer1996extensive}, where, at each state, each driver has the freedom to choose any route starting from the current state. At any time step, let $A_q$ denote the routing plan given by the platform for the driver $q$, let $\mathcal{A} := \{A_1, A_2, \dots\}$ denote the set of routing plans for all drivers, and let $A_{-q} := \mathcal{A} \setminus \{A_q\}$. For each driver $q$, let $u_q(\mathcal{A})$ denote the utility (i.e., net profit) of driver $q$ if all drivers follow the routing plan $\mathcal{A}$. In particular, we have that $u_q(\mathcal{A}) = \sum_{(s, s') \in A_q} (y(s, s') - c(s, s'))$.

The subgame-perfectness condition requires that given reward re-allocation scheme and the set of routing plans for all drivers by the platform, any driver $q$ does not have the incentive to deviate from the routing plan given to him/her. Formally, we make the following definition.

\begin{definition}[Subgame-perfectness]\label{def:sp}
A reward re-allocation scheme is \emph{subgame-perfect} if at any time step, let $\mathcal{A} := \{A_1, A_2, \dots\}$ be the routing plans decided by the platform, and for any driver $q$, and for each route $A_q'$ sharing the same starting state as $A_q$, it holds that
$u_q(A_q,A_{-q}) \ge u_q(A'_q,A_{-q})$.
\end{definition}

Note that in game theory, a subgame-perfect Nash equilibrium in a extensive game is a strategy profile for the agents such that at any point of the game, the agents' strategies form a Nash equilibrium for the continuation of the game. Definition~\ref{def:sp} requires that reward re-allocation scheme makes sure that the routing plan given by Proposition~\ref{prop:mrcd} is a subgame-perfect Nash equilibrium.

We would also like to make sure that each driver does not feel comparably inferior than others at the same state. Formally, we define the envy-freeness condition as follows.
\vvspace{-0.5ex}
\begin{definition}[Envy-freeness]\label{def:ef}
A reward re-allocation scheme is \emph{envy-free} if at any time step, let $\mathcal{A} := \{A_1, A_2, \dots\}$ be the routing plans decided by the platform, and for any two drivers $q$ and $q'$ staying at the same state, it holds that
$u_q(\mathcal{A}) = u_{q'}(\mathcal{A})$.
\end{definition}

Now we have completed the formal definition of a fair re-allocation scheme. The following lemma provides an elegant characterization of all fair re-allocation schemes and enables us to find such schemes only among the potential-based re-allocation algorithms. The proof of Lemma~\ref{lem:potential} can be found in Appendix~{\ref{appendix:proof_lem_potential}}.

\vvspace{-1ex}
\begin{lemma}
    \label{lem:potential}

Given a routing plan $\mathcal{A}$, a reward re-allocation is fair if and only if there exists a corresponding \emph{potential function} $P:S\to \mathbb{R}^{\geq 0}$ such that
\vvspace{-1ex}
\begin{enumerate}
\item For any $s \in S$ where $\mathcal{A}$ directs at least one driver to leave at state $s$ (we call such states the \emph{terminal states}), it holds that $P(s)=0$.
\item $\forall (s,s')\in Q$, $y(s,s')- c(s,s') \le P(s)-P(s')$.
\item $\forall (s,s')\in Q : F(s,s') > 0$, $y(s,s')- c(s,s') = P(s)-P(s') \geq 0$.
\item 
$\sum_{s \in S} P(s) (\deg_{i}(s) - \deg_{o}(s)) = \sum_{(s,s')\in Q} F(s,s')(p(s,s')-c(s,s'))$, where  $\deg_i(s)$ and $\deg_o(s)$ are the number of drivers to enter and leave the platform at the state $s$ respectively.
\end{enumerate}
\end{lemma}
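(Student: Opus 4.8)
The plan is to prove both directions of the equivalence, using as a bridge the \emph{optimal utility-to-go} value function. Since each driver's payoff $u_q(\mathcal{A}) = \sum_{(s,s') \in A_q}(y(s,s') - c(s,s'))$ depends only on that driver's own route, the drivers' game is separable, and I define $P(s)$ to be the maximum net profit a driver starting at $s$ can obtain under $y$, i.e. $P(s) = \max_A \sum_{(t,t') \in A}(y(t,t') - c(t,t'))$ over all routes $A$ starting at $s$. Because leaving immediately is always available (giving utility $0$), we have $P(s) \geq 0$, so $P$ maps into $\mathbb{R}^{\geq 0}$ as required.

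For the direction ``potential $\Rightarrow$ fair'', I assume $P$ satisfies conditions 1--4 and verify the four fairness requirements. The key device is a telescoping inequality: for any route $A = ((s_0,s_1),(s_1,s_2),\dots,(s_{z-1},s_z))$, condition 2 gives $u(A) = \sum_j (y(s_j,s_{j+1}) - c(s_j,s_{j+1})) \leq \sum_j (P(s_j) - P(s_{j+1})) = P(s_0) - P(s_z) \leq P(s_0)$, the last step using $P \geq 0$. Along the planned route every traversed arc has $F > 0$, so condition 3 upgrades each inequality to equality and the route ends at a terminal state with $P = 0$ (condition 1); hence the planned route attains exactly $P(s_0)$. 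This simultaneously yields subgame-perfectness (the planned route is optimal) and envy-freeness (two drivers sharing a state both realize the same optimum $P(s)$). Individual-rationality is immediate from condition 3, which forces $y(s,s') - c(s,s') = P(s) - P(s') \geq 0$ on every dispatched arc. Budget-balance follows by summing condition 3 against the flows: $\sum_{(s,s')} F(s,s')(y - c) = \sum_{(s,s')} F(s,s')(P(s) - P(s'))$, and reindexing telescopes this into $\sum_s P(s)(\mathrm{outflow}(s) - \mathrm{inflow}(s)) = \sum_s P(s)(\deg_i(s) - \deg_o(s))$ by driver-flow conservation; condition 4 then equates this to $\sum F(p - c)$, and cancelling $\sum F c$ leaves $\sum F y = \sum F p = \mathcal{I}$.

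For the converse ``fair $\Rightarrow$ potential'', I take $P$ to be the value function above and show it satisfies 1--4. Condition 2 is automatic, since prepending the arc $(s,s')$ to an optimal route from $s'$ is one feasible route from $s$, so $P(s) \geq (y(s,s') - c(s,s')) + P(s')$. For condition 1, at a terminal state the plan sends some driver to leave; subgame-perfectness makes leaving optimal, and since leaving yields $0$ while $P(s) \geq 0$ always, we conclude $P(s) = 0$. For condition 3, whenever $F(s,s') > 0$ some driver traverses $(s,s')$ in the plan; subgame-perfectness forces this choice to be optimal, giving the matching equality $P(s) = (y(s,s')-c(s,s')) + P(s')$, while individual-rationality supplies the $\geq 0$. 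Condition 4 is then just budget-balance read through the same telescoping identity in reverse.

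The step I expect to be the main obstacle is condition 3 in the converse direction: I must argue that an arc carrying positive planned flow genuinely lies on an optimal route \emph{and} that the driver's continuation from $s'$ realizes $P(s')$ exactly. This requires invoking subgame-perfectness recursively (at $s$ and then at $s'$) and leaning on envy-freeness to ensure that the ``value from $s'$ along the plan'' is the same state-dependent quantity $P(s')$ regardless of which driver occupies $s'$. A secondary subtlety is the flow-conservation bookkeeping for condition 4: I must set up the incidence so that $\mathrm{outflow}(s) - \mathrm{inflow}(s) = \deg_i(s) - \deg_o(s)$ holds at every state, and confirm that the income identity $\mathcal{I} = \sum_{(s,s')} F(s,s')\, p(s,s')$ is precisely what makes condition 4 equivalent to budget-balance.
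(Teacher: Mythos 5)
Your proof is correct, and it takes a route that differs from the paper's in a way worth noting. The paper constructs the potential from envy-freeness: it defines $P(s)$ as the common utility-to-go of all drivers who visit $s$ under the plan (setting $P(s)=0$ on unvisited states), and proves the ``potential $\Rightarrow$ fair'' direction by contradiction, arguing case-by-case that a violation of any fairness property would contradict one of conditions 1--4. You instead take $P$ to be the optimal value function $P(s)=\max_A \sum_{(t,t')\in A}\left(y(t,t')-c(t,t')\right)$ and verify fairness directly via the telescoping inequality. The two functions coincide on visited states (by subgame-perfectness the planned continuation attains the optimum), but your choice buys something concrete: condition 2 holds by definition of the maximum on \emph{every} admissible arc, including arcs leaving states that no driver occupies. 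The paper's argument for condition 2 (``a driver at $s$ would deviate and benefit'') presupposes a driver is present at $s$, so it is silent on such arcs, where its $P(s)=0$ convention could in principle clash with a large $y(s,s')$; your construction closes that edge case. Conversely, the paper's construction makes transparent exactly where envy-freeness enters, whereas in your proof envy-freeness is nearly a by-product of subgame-perfectness (utilities are separable across drivers), which is why you only ``lean on'' it in condition 3. The two subtleties you flag are real but handled the same way in both proofs: condition 3 needs subgame-perfectness invoked at consecutive time steps (the paper does this implicitly through its definition of $P$ as plan utility-to-go), and condition 4's equivalence with budget-balance rests on the accounting identity $\mathcal{I}=\sum_{(s,s')}F(s,s')\,p(s,s')$, a convention the paper also adopts implicitly when it calls the right-hand side ``the revenue of the platform.''
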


\vvspace{-1ex}
Leveraging the power of Lemma~\ref{lem:potential}, we are able to prove the following theorem 
stating that a fair reward re-allocation scheme always exists in all non-degenerating scenarios (i.e., the total revenue is non-negative and at least one driver starts from a non-terminal state).
\vvspace{-0.5ex}
\begin{theorem}
    \label{thm:feasiblility}
Let $S_\#\subseteq S$ denote the set of terminal states. 
If there exist $s_1 \in S \setminus S_\#$ and $s_2\in S$ such that $F(s_1,s_2)>0$ and $\mathcal{I} \ge \sum_{(s,s')\in Q} F(s,s')\cdot c(s,s')$ (recall $\mathcal{I}$ is the total income collected from the riders), then there exists a fair reward allocation plan.
    \label{p2:feasible}
\end{theorem}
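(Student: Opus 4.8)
The plan is to route everything through the characterization in Lemma~\ref{lem:potential}: a fair re-allocation exists if and only if there is a potential $P:S\to\mathbb{R}^{\ge 0}$ meeting its four conditions, after which $y$ is recovered from $P$. So I would first reduce the theorem to a pure existence statement about $P$. Given any candidate $P$, condition~3 forces $y(s,s')=c(s,s')+\bigl(P(s)-P(s')\bigr)$ on every dispatched arc, which is a legitimate non-negative price exactly when $P$ is non-increasing along dispatched arcs; condition~2 on the remaining admissible arcs only asks that the interval $\bigl[0,\,c(s,s')+P(s)-P(s')\bigr]$ from which $y(s,s')$ is drawn be non-empty, i.e.\ that $P$ rise by at most $c(s,s')$ across every arc of $Q$. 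Finally, summing condition~3 over the dispatched arcs and using flow conservation together with $P|_{S_\#}=0$, condition~4 collapses to the single scalar requirement $\sum_{s\in S} n_s\,P(s)=\mathcal{I}-\sum_{(s,s')\in Q}F(s,s')c(s,s')=:R$, the total revenue.

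Thus it suffices to exhibit $P:S\to\mathbb{R}^{\ge 0}$ with $P|_{S_\#}=0$, non-increasing along every dispatched arc, rising by at most $c(s,s')$ along every admissible arc, and with $\sum_s n_s P(s)=R$. The set $\mathcal{F}$ of $P$ satisfying the three inequality constraints (dropping the equality) is a convex polyhedron containing the origin; since each constraint is preserved under multiplication by any $\lambda\in[0,1]$, it is closed under such scaling. Hence the linear functional $\Phi(P):=\sum_s n_s P(s)$ maps $\mathcal{F}$ onto an interval $[0,\sup_{\mathcal{F}}\Phi]$. As the hypothesis gives $R\ge 0$, the entire theorem reduces to the single inequality $\sup_{\mathcal{F}}\Phi\ge R$: once a feasible $P^\star$ with $\Phi(P^\star)\ge R$ is in hand, $\tfrac{R}{\Phi(P^\star)}P^\star\in\mathcal{F}$ realizes the target value exactly.

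To produce such a $P^\star$ I would read it off the optimal dispatch as a profit-to-go potential. Decompose the driver circulation into entry-to-terminal routes and let $P^\star(s)$ be the surplus a driver collects from $s$ onward once the pooled income is credited to the dispatched arcs with non-negative per-arc margins. By construction $P^\star$ vanishes at and downstream of terminal states, is non-increasing along dispatched arcs (the margins are non-negative, which is individual rationality), and its driver-weighted sum equals the total realized profit $R$. The role of the hypothesis $F(s_1,s_2)>0$ with $s_1\notin S_\#$ is precisely to certify that this loading is non-degenerate: it supplies a dispatched arc leaving a non-terminal state, so a strictly positive amount of profit can be carried by a potential that is not identically zero, excluding the degenerate case $\sup_{\mathcal{F}}\Phi=0<R$.

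The step I expect to be the main obstacle is exactly this last one, namely establishing $\sup_{\mathcal{F}}\Phi\ge R$. The difficulty is that the potential must reconcile three seemingly opposed demands: it must vanish on terminal states that may nonetheless emit dispatched traffic (forcing the potential, and all downstream profit-to-go, to zero there); it must never rise along a dispatched arc (individual rationality, condition~3); and it must never rise by more than $c(s,s')$ along any admissible arc (so the deviation-proof payments of condition~2 stay non-negative). The natural profit-to-go witness $P^\star$ respects the first two but can violate the cost-slope bound on a non-dispatched arc leading into a high-value region, while the value function of the dispatch respects the cost-slope bound by optimality but increases along repositioning (without-rider) arcs; neither alone lies in $\mathcal{F}$. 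Verifying that one can always blend them — trimming the offending margins without letting $\Phi$ drop below $R$, using regularity and the non-negativity of $R$ — is where I would spend the bulk of the effort, with the non-terminal starting arc supplying the slack that makes the reconciliation possible.
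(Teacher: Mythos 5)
Your reduction to Lemma~\ref{lem:potential} is the right frame, and your normalization step is essentially the paper's: the set $\mathcal{F}$ of admissible potentials is closed under scaling by $\lambda\in[0,1]$, condition~4 collapses to the single scalar equation $\sum_s n_s P(s)=R:=\mathcal{I}-\sum_{(s,s')\in Q}F(s,s')c(s,s')$, and $R\ge 0$ by hypothesis, so any feasible $P^\star$ with $\Phi(P^\star)\ge R$ can be rescaled to hit $R$ exactly. But the proposal has a genuine gap at precisely the point you flag yourself: you never actually produce such a $P^\star$. Your candidate (the profit-to-go potential read off the dispatch) is conceded to possibly violate the slope constraint $P(s')-P(s)\le c(s,s')$ on non-dispatched arcs, and the ``blending/trimming'' of it with the value function is deferred as the bulk of the work. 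That deferred step is not a routine verification --- it is the whole content of the theorem --- so as written the argument does not establish the statement. (Incidentally, your appeal to ``regularity'' there is off-track: the theorem does not assume regularity, and the paper's proof never uses it.)

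The paper sidesteps your obstacle by abandoning profit-based witnesses altogether and using a purely combinatorial one. Contract $S_\#$ into a single sink $t$, give every arc carrying at least one order length $1$ and every cruise arc length $0$; since all arcs advance in time this graph is a DAG, so $\tilde{P}(s):=$ the longest-path distance from $s$ to $t$ (and $\tilde{P}\equiv 0$ on $S_\#$) is well defined. This $\tilde{P}$ is monotone non-increasing along \emph{every} admissible arc, so the cost-slope constraint you struggled with holds trivially (costs are non-negative, and $\tilde{P}(s')-\tilde{P}(s)\le 0$); it vanishes on terminal states; and it drops by at least $1$ across any dispatched ride arc. The hypothesis $F(s_1,s_2)>0$ with $s_1\notin S_\#$ is exactly what makes the denominator $\sum_{(s,s')\in Q}F(s,s')\mu(s,s')$ positive, where $\mu(s,s'):=\tilde{P}(s)-\tilde{P}(s')$, so one can set
\begin{equation*}
P(s)=\tilde{P}(s)\cdot\frac{\mathcal{I}-\sum_{(s,s')\in Q}F(s,s')c(s,s')}{\sum_{(s,s')\in Q}F(s,s')\mu(s,s')},
\end{equation*}
a non-negative multiple of $\tilde{P}$ by the revenue hypothesis, and a telescoping/flow-conservation argument gives $\sum_s n_sP(s)=R$, i.e.\ budget balance. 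In short: the missing witness is a longest-path potential, not a reconciliation of the two profit-based potentials you considered; once you have it, your own scaling argument (which coincides with the paper's) finishes the proof.
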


\begin{proof}
    
    We define a directed graph $G'$ on vertex set $V(G')=(S-S_\#) \cup \{t\} $, in which all states in $S_\#$ are contracted in a single vertex $t$. For each order from $s\notin S_\#$ to $s'$ we add a directed edge $(s,s')$ with length 1 if $s'\notin S_\#$, or $(s,t)$ with length 1 if $s' \in S_\#$, and for each possible cruise arc from $s$ to $s'$ we add an edge with length 0. 
    
    As all arcs advance in time, the graph is a directed acyclic graph (DAG). Therefore, we can define $\tilde{P}(s)$ as the maximum distance of all paths from $s$ to $t$, or $0$ if $s\in S_\#$.
    Then we let $R_* = \{(s,s')\in Q: f(s,s')>0\}$, define $\mu(s,s') = \tilde{P}(s) - \tilde{P}(s')$, and then we allocate the revenue proportional to $\mu$, i.e. let
    
    \begin{equation}
        P(s) = \tilde{P}(s) \cdot \frac{\mathcal{I} - \sum_{(s,s')\in Q} F(s,s') c(s,s')}{\sum_{(s,s')\in Q} F(s,s')\mu(s,s')}.
    \end{equation}
    
    Because of the assumption that $\mathcal{I} \ge \sum_{(s,s')\in Q} F(s,s')c(s,s')$, we are ensured that $r(s,s')-c(s,s')$ is proportional to $\mu(s,s')$ with a non-negative ratio. We can see all constraints are satisfied.
\end{proof}

\vvspace{-1ex}

When the fair re-allocation scheme is not unique, we solve the quadratic program in Figure~\ref{fig:QP} to find the scheme to minimize the total squared distortion between the price paid by the rider and the reward allocated to the driver among all trips. In this way, we try the best to let the reward re-allocation reasonably reflects the real income generated by driving through each arc.
It is straightforward to see that the constraints (\ref{p21:con1},\ref{p21:con2},\ref{p21:con4},\ref{p21:con5},\ref{p21:con6}) in the quadratic program implement the conditions stated in Lemma~\ref{lem:potential}.

\begin{figure}[h]
\begin{center}
\fbox{\begin{minipage}[t]{0.95\columnwidth}%
\begin{small}
\vvspace{-2.5ex}
\begin{align}
    & \textup{Minimize~} \sum_{F(s,s')>0} F(s,s') (p(s,s')-y(s,s'))^2 \nonumber \\
    & \textup{Subject to~ }   P(s)\ge 0, ~~ \forall s\in S  \\
    & \qquad y(s,s')  = P(s)-P(s')+c(s,s'),~~\forall F(s,s')>0 \label{p21:con1}\\
    & \qquad y(s,s')  \le P(s)-P(s')+c(s,s'),~~\forall (s,s')\in Q \label{p21:con2}\\
    & \qquad  P(s) = 0, ~~\forall s\in S_\# \label{p21:con4}\\
    & \qquad  y(s,s') \ge c(s,s'),~~\forall F(s,s')>0 \label{p21:con5}\\
    &  \qquad  \sum_{s \in S} P(s) (\deg_{i}(s) - \deg_{o}(s)) \nonumber
\end{align}
\vvspace{-5ex}
\begin{align}
    & \qquad \qquad \qquad = \sum_{(s,s')\in Q} F(s,s')(p(s,s')-c(s,s')) \label{p21:con6}
\end{align}
\vvspace{-2.5ex}
\end{small}
\end{minipage}}
\end{center}
\vvspace{-3ex}
\caption{Quad.\ Prog.\ with decision variables $\{P(s)\}_{s \in S}$}
\label{fig:QP}
\end{figure}

\vvspace{-2ex}
\section{The Stochastic-Demand Setting}\label{sec:stochastic-setting} 
\vvspace{-0.5ex}

In the previous sections, we studied the optimal car dispatching and reward allocation task assuming the access to the full list $R$ of latent orders, which is not realistic in practice. In this section, we assume that $R$ is drawn from an \emph{unknown} distribution $\{\calD(s,s')\}$ and address the problem with techniques combining both learning and optimization. To achieve this goal, we 
study the optimal car dispatching and reward allocation task with the distribution $\{\calD(s,s')\}$ \emph{known}. We will refer to this task as the \emph{stochastic-demand setting}. 



Our algorithm for the stochastic-demand setting is a natural extension of that for the deterministic setting presented in the previous sections. Below we describe the adaptation we make for each phase in the deterministic setting. We will also introduce a special parametric demand distribution (Gaussian-Poisson distribution) for the learning algorithm in Appendix~\ref{app:gaussian-poisson}.

\paragraph{Phase 1: Revenue Optimization.} For each arc $(s,s')$, we denote $x_\ss$ as the random variable for the number of latent orders, $\{v_t\}_{t\in[x_\ss]}$ as the random variables for the valuations, and denote $\calD(\ss)$ as the distribution of $(x_\ss,\{v_t\}_{t\in[x_\ss]})$, with the assumption that each $v_t$ are {\it i.i.d.}~variables.


 
For each arc $(\ss)$, if we fix the price to be $p$ and plan to dispatch $n$ drivers to the arc, the number of the fulfilled latent orders will be the smaller value of $n$ and the number of  orders of valuations at least $p$. Therefore, given $\calD(\ss)$, we may compute the following quantities:

\begin{itemize}
\item The probability mass function $\mathcal{P}(i; \ss, p) :\N\to\R$ for the number of qualified orders (orders with valuation at least $p$):
$\mathcal{P}(i; \ss, p) = \sum_{j=0}^\infty b(i,j;\Pr[v_t\ge p])\Pr[x_\ss = j]$,
where $b(k,n;P)=\binom{n}{k}P^k(1-P)^{n-k}$ computes the binomial distribution.
\vvspace{-0.5ex}
\item Let $\tilde{u}(n; \ss, p)$ be the number of the fulfilled latent orders; its expectation:
$
        \E[\tilde{u}(n; \ss, p)] = \sum_{i=0}^\infty \mathcal{P}(i; \ss, p)\min\{i,n\}$.
\vvspace{-0.5ex}
\item The expected revenue on $(s,s')$ at price $p$ and $n$ drivers:
$        \mathcal{R}(n,p;\ss) = p\cdot \E[\tilde{u}(n; \ss, p)] - c(\ss)\cdot n$.
\end{itemize}

The following definition states the optimization problem we have to solve in order to maximize the revenue in car dispatching in the stochastic-demand setting.

\begin{definition}\label{def:stochastic-mrcd}
Given $\calD(\ss)$ for all arcs $(\ss)$, the \emph{Stochastic Maximum Revenue Car Dispatching} problem is to find the optimal solution to the NLWC problem on the directed graph $(V_0, E_0)$, where $(V_0, E_0)$ is constructed in a similar way as described above Proposition~\ref{prop:mrcd}, and the only difference is that for the edges corresponding to driving with a rider, we set the corresponding reward function $r(n; e^{(\mathrm{w})}_{s,s'}) =\max_{p\in \R^{\ge 0}} \{\mathcal{R}(n,p;\ss)\}$.
\end{definition}
\vvspace{-1ex}

In Definition~\ref{def:stochastic-mrcd},  $r(n,e_{s,s'}^{(w)})$ is re-defined so as to equal the maximum possible (over all candidate prices) expected revenue generated by dispatching $n$ drivers to the arc $(s,s')$. Therefore, the optimal solution to the stochastic maximum revenue car dispatching problem is the maximum possible expected revenue achieved by any dispatching plan.

Note that in Definition~\ref{def:stochastic-mrcd}, the only quantity that specifically depends on the form of the demand distribution is the non-linear reward function on the edges $e_\ss^{\mathrm{(w)}}$. 

\vvspace{-1ex}

\paragraph{Phase 2: Fair Re-allocation.} 

After solving the NLWC problem on $(V_0, E_0)$, we obtain the number of drivers to dispatch and the price for each arc $(s, s')$. With this information, we may invoke the quadratic program in Figure~\ref{fig:QP} to find out the potential-based reward re-allocation scheme for the drivers. We are able to show the following the fairness guarantees in the stochastic-demand setting, while the detailed proof is omitted since it is almost the same as the proof in Phase 2 of the deterministic setting.


\begin{theorem}\label{thm:stoc:fairness}
In the stochastic-demand setting, the potential-based reward re-allocation scheme obtained by the QP in Figure~\ref{fig:QP} satisfies the fairness conditions in Definition~\ref{def:fair}, except for that the budget-balance condition is changed to the following expectation version.
\begin{itemize}
\item \underline{\emph{Expected}-budget-balance.} The expected income collected from the riders should equal to the amount to be distributed to the drivers.\footnote{Similarly, here we also omit the amount that the platform would like to keep for profit.} Formally, it is required that
$\sum_{(s, s') \in Q} y(s, s') \cdot F(s, s') = \E[\mathcal{I}]$, where $\mathcal{I}$ is the collected income.
\end{itemize}
\end{theorem}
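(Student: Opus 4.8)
The plan is to reduce the statement to the already-established deterministic guarantees of Phase 2, isolating the single place where randomness enters. Once the stochastic NLWC problem (Definition~\ref{def:stochastic-mrcd}) is solved, the dispatch counts $F(s,s')$ and the posted prices $p(s,s')$ are fixed, deterministic quantities; only the realized income $\mathcal{I}$ stays random. By Lemma~\ref{lem:potential}, three of the four fairness conditions of Definition~\ref{def:fair}---individual-rationality, subgame-perfectness and envy-freeness---are characterized purely through $F$, the costs $c$, the payments $y$ and the potential $P$, none of which involves the random realization of orders. Thus, as soon as a feasible potential $P$ for the QP in Figure~\ref{fig:QP} is exhibited, constraints (\ref{p21:con1}), (\ref{p21:con2}), (\ref{p21:con4}) and (\ref{p21:con5}) enforce conditions 2, 3 and 4 of Lemma~\ref{lem:potential} exactly as before, and those three fairness properties transfer verbatim from the deterministic proof.

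The whole difference is budget-balance, the only condition mentioning $\mathcal{I}$. Using the quantities defined above Definition~\ref{def:stochastic-mrcd}, the number of served riders on arc $(\ss)$ when $F(s,s')$ drivers are dispatched at price $p(s,s')$ is $\tilde{u}(F(s,s'); \ss, p(s,s'))$, so $\E[\mathcal{I}]=\sum_{(\ss)\in Q} p(s,s')\,\E[\tilde{u}(F(s,s'); \ss, p(s,s'))]$. I would then replace the deterministic price in the budget constraint (\ref{p21:con6}) by the expected per-driver income $\bar{p}(s,s'):=p(s,s')\,\E[\tilde{u}(F(s,s'); \ss, p(s,s'))]/F(s,s')$, so that its right-hand side reads $\sum_{(\ss)}F(s,s')(\bar{p}(s,s')-c(s,s'))=\E[\mathcal{I}]-\sum_{(\ss)}F(s,s')c(s,s')$. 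The telescoping/flow-conservation identity (unchanged from the deterministic case) gives $\sum_{s} P(s)(\deg_i(s)-\deg_o(s))=\sum_{(\ss)}F(s,s')(y(s,s')-c(s,s'))$ through condition 3 of Lemma~\ref{lem:potential}; combining the two identities yields $\sum_{(\ss)}y(s,s')F(s,s')=\E[\mathcal{I}]$, which is precisely the expected-budget-balance statement.

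For feasibility of the QP I would replay the construction of Theorem~\ref{thm:feasiblility} with no structural change: contract the terminal states into a sink $t$, build the time-advancing DAG, let $\tilde{P}(s)$ be the longest-path length from $s$ to $t$, and scale $P=\kappa\,\tilde{P}$ with $\kappa=(\E[\mathcal{I}]-\sum_{(\ss)}F(s,s')c(s,s'))/(\sum_{(\ss)}F(s,s')\mu(s,s'))$. The scaling factor is non-negative because the stochastic NLWC solution maximizes \emph{expected} net revenue, so its value is non-negative and the hypothesis $\E[\mathcal{I}]\ge\sum_{(\ss)}F(s,s')c(s,s')$ holds---the exact analog of the assumption in Theorem~\ref{thm:feasiblility}.

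The step demanding the most care is the bookkeeping in the second paragraph: confirming that the reinterpreted right-hand side of (\ref{p21:con6}) equals the genuine expected income. Since the realized number of served riders is capped both by the dispatched drivers and by the random number of qualifying orders, the per-arc expected income is $p(s,s')\,\E[\tilde{u}(F(s,s'); \ss, p(s,s'))]$ rather than $F(s,s')\,p(s,s')$; one must check that this matches the expected-income component implicitly produced by the reward function $r(n;e^{(\mathrm{w})}_{s,s'})$ of Definition~\ref{def:stochastic-mrcd}, so that the ``net revenue to be distributed'' is consistently $\E[\mathcal{I}]-\sum_{(\ss)}F(s,s')c(s,s')$ on both sides. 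Once this identity is pinned down, every remaining step is a verbatim transcription of the deterministic argument.
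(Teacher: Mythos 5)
Your proposal is correct and follows essentially the same route as the paper: the paper omits the detailed proof of Theorem~\ref{thm:stoc:fairness} precisely because it is ``almost the same as the proof in Phase 2 of the deterministic setting,'' and your argument is exactly that transfer---Lemma~\ref{lem:potential} together with the QP constraints handles individual-rationality, subgame-perfectness and envy-freeness unchanged (these are deterministic once $F$, $p$, $y$ are fixed), the budget constraint (\ref{p21:con6}) is reinterpreted with expected income so that the telescoping identity gives expected-budget-balance, and feasibility is re-established by replaying the construction of Theorem~\ref{thm:feasiblility} (with the nice extra observation that $\E[\mathcal{I}] \ge \sum_{(s,s')} F(s,s')c(s,s')$ follows from optimality of the stochastic NLWC solution rather than being assumed). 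One cosmetic slip: constraints (\ref{p21:con1}), (\ref{p21:con2}), (\ref{p21:con4}), (\ref{p21:con5}) enforce conditions 1--3 of Lemma~\ref{lem:potential} (condition 4 being the budget identity), not ``conditions 2, 3 and 4,'' but this mislabeling does not affect the argument.
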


\vvspace{-1ex}
\paragraph{Online Learning.} We use a Thompson sampling-based algorithm to learn the demand distributions from riders' responses to given prices. The details are deferred to Appendix~\ref{sec:online-learning}. 

\vvspace{-1.5ex}

\section{Experimental Evaluation} \label{sec:experiments}
\vvspace{-1ex}
Due to space constraints, we defer many of the experiments to Appendix~\ref{app:add-exp}. For example, we empirically verify the regularity of Gaussian-Poisson distributions in Appendix~\ref{app:add-exp-regularity}, and evaluate the online learning algorithm in Appendix~\ref{app:add-exp-robustness}; we also show an illustrative example of our fair re-allocation algorithm on the real-world dataset in Appendix~\ref{app:add-exp-example}. 

Experiments are run on an Intel i7-8750H, 24GB RAM computer with MATLAB 2021b. 
\vvspace{-1ex}
\subsection{Model Setting}
\vvspace{-0.5ex}
We now evaluate our algorithm by simulated experiments on the DiDi Chuxing public dataset \cite{Didi2021} collected from the real-world ridesharing in Chengdu, China.
For one day, we extract all orders and driver initial positions and discretize the locations into $10\times 10 = 100$ squares with dimension $2\mathrm{km}\times 2\mathrm{km}$, and divide the time interval between 8am and 1pm in a day into 20 slots, each of which spans 15 minutes. Therefore, there are 2000 spatio-temporal states in a day, and we use the reward column in the dataset as the rider's valuation for the trip. Finally, we assume the latent orders follow the Gaussian-Poisson distribution (Appendix~\ref{app:gaussian-poisson}), and collect the data for 30 days and fit the numbers and valuations of orders in any arc into the Gaussian-Poisson distribution, as the true model parameters. 

\textbf{Robustness.}
To evaluate the generalization ability of our algorithm, we modify the following two key parameters in experiments: the number of drivers and the standard deviations of the riders' valuations. Here we report the experimental results showing that our algorithms still perform well under these different experimental environments.

In Table~\ref{table:drivers1}, we modify the number of drivers. In the $50\%$ drivers setting we remove each driver with $50\%$ independent probability and in the $200\%$ drivers setting we duplicate every driver. In Table~\ref{table:sigma1}, we modify the standard deviations of the riders' valuations
by $0.5$ and $1.5$ times respectively.

\vvspace{-1ex}

\subsection{Revenue Evaluation}

 Given the true model parameters, we invoke the algorithms described in Section~\ref{sec:stochastic-setting} to find out the offline (model parameters known) optimal revenue of the Stochastic Maximum Revenue Car Dispatching problem. We refer to this value as the \emph{two-phase value} ({\sf 2P}). For comparison, we introduce the baseline \emph{distance-based fix-price algorithm} ({\sf FP}) where the price for each arc is proportional to the distance of the trip with a globally fixed (but tuned) ratio, and the dispatching is done via the same network-flow-based planning algorithm. 

\vvspace{-1ex}

\subsection{Fairness Evaluation}

To evaluate the fairness, we define the $A(s)$ as the \emph{average net income} of all drivers initially at state $s$. For a driver $q\in Q$, we denote $s_q$ as the initial state of $q$ and $u_q$ as the total net income of $q$. Then, we define the absolute unfairness
    $\Xi = \sqrt{\frac{\sum_{q\in Q} (u_q-A(s_q))^2}{|Q|}}$
and relative unfairness $\xi = \Xi / \frac{\sum_{q\in Q} u_q}{|Q|}$, which can be interpreted as the absolute and relative fluctuation of drivers' net incomes from given initial states. We have proven that our two-phased algorithm guarantees \emph{zero} unfairness, and evaluate the unfairness of baseline pricing algorithms. To show the contribution of re-allocation, we refer to the result of only Phase 1 as {\sf P1}.
\vvspace{-1ex}
\subsection{Results}

We report the revenue ({\sf Rev}) and relative unfairness ({\sf Unf}) of different settings in following tables.
\vvspace{-1ex}
\begin{table}[hbt]
\centering
\begin{tabular}{|c|c|c|c|c|c|c|c|c}
\hline
   \#drivers & \multicolumn{2}{c|}{6655} & \multicolumn{2}{c|}{13411} & \multicolumn{2}{c|}{26822} \\ \hline
   & {\sf Rev} & {\sf Unf} & {\sf Rev} & {\sf Unf} & {\sf Rev} & {\sf Unf} \\ \hline
{\sf 2P} & \bf 6.82 &\bf 0.000  &\bf 9.32 &\bf  0.000  &\bf 11.17 &\bf 0.000 \\ \hline
{\sf P1} &\bf  6.82 &  0.114  &\bf 9.32 &  0.172  &\bf 11.17 & 0.243 \\ \hline
{\sf FP} &  5.54  & 0.108 &  7.56  & 0.168  & 9.02 & 0.244 \\ \hline
\end{tabular}
\vvspace{-1ex}
\caption{\small{{\sf Rev}$(\times 10^4)$/{\sf Unf} with different numbers of drivers }.}
\label{table:drivers1}
\centering
\begin{tabular}{|c|c|c|c|c|c|c|}
\hline
   stddev    & \multicolumn{2}{c|}{$0.5\sigma$} &  \multicolumn{2}{c|}{$1.0\sigma$} &  \multicolumn{2}{c|}{$1.5\sigma$} \\ \hline
   & {\sf Rev} & {\sf Unf} & {\sf Rev} & {\sf Unf} & {\sf Rev} & {\sf Unf} \\ \hline
{\sf 2P}  &\bf 10.36 &\bf 0.000  &\bf 9.32 &\bf 0.000   &\bf 8.61 &\bf 0.000 \\ \hline
{\sf P1} &\bf  10.36 & 0.167 &\bf 9.32 & 0.172  &\bf  8.61 & 0.178   \\ \hline
{\sf FP} & 7.90 & 0.162  &  7.56  & 0.168 & 7.25 & 0.172 \\ \hline
\end{tabular}
\vvspace{-1ex}
\caption{\small{{\sf Rev}$(\times 10^4)$/{\sf Unf} with modified standard deviations }.}
\label{table:sigma1}
\end{table}
\vvspace{-2.5ex}

We see that our algorithm achieves higher revenue than the fixed-price algorithm, and our re-allocation phase eliminates the unfairness that would typically range from $10\%$ to $25\%$ of drivers' incomes, which increases with numbers of drivers. 

Intuitively, a large number of drivers would tend to result in increased unfairness as they fulfill a large portion of latent orders with a wider spread of profits (analysis in Appendix~\ref{app:unfairness:drivers}). Therefore, the re-allocation phase becomes essential for satisfaction of drivers especially in this scenario.

\vvspace{-2ex}
\section{Computational Complexity Analysis} \label{app:complexity}

Let $n,m,a$ be the number of states, latent orders and admissible arcs, respectively. Our Phase 1 essentially solves a linear program of size $O(m+a)$, which runs in $\tilde{O}((m+a)^{2.373})$ time \cite{LPComplexity}. Our Phase 2 solves a quadratic problem of $O(n)$ variables and $O(a)$ input size, which can be transformed into a semidefinite program that runs in $\tilde{O}(\sqrt{n}(an^2+a^{2.373}+n^{2.373}))$ time \cite{jiang2020faster}.

\section{Conclusion}
\vvspace{-0.5ex}

In this paper, we present an algorithmic framework for car dispatching and pricing with both revenue and fairness guarantees. Empirical evaluation shows that our method performs better than the baseline alternatives in the real-world dataset. For future directions, it is interesting to prove the regularity of edge demand functions in Gaussian-Poisson distribution and explore the regularity property of other distributions, and mathematically prove the guarantees of our Thompson Sampling algorithm (e.g., its convergence property and finite-sample regret bound).

\section*{Acknowledgments}

Zishuo Zhao would appreciate Shiyuan Wang for discussions in relevant topics and Shuran Zheng for valuable comments in improvement of a preliminary version. Zishuo Zhao is supported in part by NSF CCF-2006526.


\clearpage

\bibliographystyle{named}

\begin{small}
\bibliography{ref}

\begin{thebibliography}{}

\bibitem[\protect\citeauthoryear{Al \bgroup \em et al.\egroup
  }{2005}]{marginal1}
Maiwenn~J. Al, Talitha~L. Feenstra, and Ben A.~van Hout.
\newblock Optimal allocation of resources over health care programmes: dealing
  with decreasing marginal utility and uncertainty.
\newblock {\em Health Economics}, 14(7):655--667, 2005.

\bibitem[\protect\citeauthoryear{Bei and Zhang}{2018}]{Bei2018AlgorithmsFT}
Xiaohui Bei and Shengyu Zhang.
\newblock Algorithms for trip-vehicle assignment in ride-sharing.
\newblock In {\em Thirty-second AAAI conference on artificial intelligence},
  2018.

\bibitem[\protect\citeauthoryear{Besbes and Zeevi}{2009}]{poisson3}
Omar Besbes and Assaf Zeevi.
\newblock Dynamic pricing without knowing the demand function: Risk bounds and
  near-optimal algorithms.
\newblock {\em Operations Research}, 57:1407--1420, 12 2009.

\bibitem[\protect\citeauthoryear{Bimpikis \bgroup \em et al.\egroup
  }{2019}]{2019Spatial}
K.~Bimpikis, O.~Candogan, and D.~Saban.
\newblock Spatial pricing in ride-sharing networks.
\newblock {\em Operations Research}, 67, 2019.

\bibitem[\protect\citeauthoryear{Castillo \bgroup \em et al.\egroup
  }{2017}]{wild_goose}
Juan~Camilo Castillo, Dan Knoepfle, and Glen Weyl.
\newblock Surge pricing solves the wild goose chase.
\newblock In {\em ACM Conference on Economics and Computation}, pages 241--242,
  2017.

\bibitem[\protect\citeauthoryear{Chapelle and Li}{2011}]{chapelle2011empirical}
Olivier Chapelle and Lihong Li.
\newblock An empirical evaluation of thompson sampling.
\newblock {\em Advances in neural information processing systems},
  24:2249--2257, 2011.

\bibitem[\protect\citeauthoryear{Chawla \bgroup \em et al.\egroup
  }{2007}]{chawla2007algorithmic}
Shuchi Chawla, Jason~D Hartline, and Robert Kleinberg.
\newblock Algorithmic pricing via virtual valuations.
\newblock In {\em Proceedings of the 8th ACM Conference on Electronic
  Commerce}, pages 243--251, 2007.

\bibitem[\protect\citeauthoryear{Cohen \bgroup \em et al.\egroup
  }{2019}]{LPComplexity}
Michael~B. Cohen, Yin~Tat Lee, and Zhao Song.
\newblock Solving linear programs in the current matrix multiplication time.
\newblock In {\em Proceedings of the 51st Annual ACM SIGACT Symposium on Theory
  of Computing}, page 938–942, New York, NY, USA, 2019. Association for
  Computing Machinery.

\bibitem[\protect\citeauthoryear{{Didi Chuxing}}{2021}]{Didi2021}
{Didi Chuxing}.
\newblock {The GAIA Public Dataset}, 2021.
\newblock data retrieved from
  \url{https://outreach.didichuxing.com/research/opendata/en/}.

\bibitem[\protect\citeauthoryear{Gallego and Ryzin}{1994}]{poisson2}
G.~Gallego and G.~Ryzin.
\newblock Optimal dynamic pricing of inventories with stochastic demand over
  finite horizons.
\newblock {\em Management Science}, 40:999--1020, 1994.

\bibitem[\protect\citeauthoryear{Glazer and
  Rubinstein}{1996}]{glazer1996extensive}
Jacob Glazer and Ariel Rubinstein.
\newblock An extensive game as a guide for solving a normal game.
\newblock {\em journal of economic theory}, 70(1):32--42, 1996.

\bibitem[\protect\citeauthoryear{Hrn{\v{c}}{\'\i}{\v{r}} \bgroup \em et
  al.\egroup }{2015}]{hrnvcivr2015ridesharing}
Jan Hrn{\v{c}}{\'\i}{\v{r}}, Michael Rovatsos, and Michal Jakob.
\newblock Ridesharing on timetabled transport services: A multiagent planning
  approach.
\newblock {\em Journal of Intelligent Transportation Systems}, 19(1):89--105,
  2015.

\bibitem[\protect\citeauthoryear{Jiang and
  Leyton-Brown}{2005}]{jiang2005estimating}
Albert~Xin Jiang and Kevin Leyton-Brown.
\newblock Estimating bidders’ valuation distributions in online auctions.
\newblock In {\em Proceedings of IJCAI-05 workshop on game theoretic and
  decision theoretic agents}, pages 98--107, 2005.

\bibitem[\protect\citeauthoryear{Jiang \bgroup \em et al.\egroup
  }{2020}]{jiang2020faster}
Haotian Jiang, Tarun Kathuria, Yin~Tat Lee, Swati Padmanabhan, and Zhao Song.
\newblock A faster interior point method for semidefinite programming.
\newblock In {\em 2020 IEEE 61st Annual Symposium on Foundations of Computer
  Science (FOCS)}, pages 910--918. IEEE, 2020.

\bibitem[\protect\citeauthoryear{Karp}{1972}]{Karp1972}
Richard~M. Karp.
\newblock {\em Reducibility among Combinatorial Problems}, pages 85--103.
\newblock 1972.

\bibitem[\protect\citeauthoryear{Kashyap}{2018}]{kashyap2018auction}
Ravi Kashyap.
\newblock Auction theory adaptations for real life applications.
\newblock {\em Research in Economics}, 72(4):452--481, 2018.

\bibitem[\protect\citeauthoryear{Kawale \bgroup \em et al.\egroup
  }{2015}]{kawale2015efficient}
Jaya Kawale, Hung~H Bui, Branislav Kveton, Long Tran-Thanh, and Sanjay Chawla.
\newblock Efficient thompson sampling for online matrix-factorization
  recommendation.
\newblock In {\em Advances in neural information processing systems}, pages
  1297--1305, 2015.

\bibitem[\protect\citeauthoryear{Lax and Terrell}{2014}]{lax2014calculus}
Peter~D Lax and Maria~Shea Terrell.
\newblock {\em Calculus with applications}.
\newblock Springer, 2014.

\bibitem[\protect\citeauthoryear{Li \bgroup \em et al.\egroup }{2019}]{www}
Minne Li, Zhiwei Qin, Yan Jiao, Yaodong Yang, Jun Wang, Chenxi Wang, Guobin Wu,
  and Jieping Ye.
\newblock Efficient ridesharing order dispatching with mean field multi-agent
  reinforcement learning.
\newblock WWW '19, 2019.

\bibitem[\protect\citeauthoryear{Ma \bgroup \em et al.\egroup
  }{2020}]{pricing1}
Hongyao Ma, Fei Fang, and David~C. Parkes.
\newblock Spatio-temporal pricing for ridesharing platforms.
\newblock {\em SIGecom Exch.}, 18(2):53–57, December 2020.

\bibitem[\protect\citeauthoryear{Qin \bgroup \em et al.\egroup
  }{2020}]{2020Ride}
T.~Qin, X.~Tang, Y.~Jiao, F.~Zhang, and J.~Ye.
\newblock Ride-hailing order dispatching at didi via reinforcement learning.
\newblock {\em Interface}, 50(5):272--286, 2020.

\bibitem[\protect\citeauthoryear{Riquelme \bgroup \em et al.\egroup
  }{2015}]{queue_pricing}
Carlos Riquelme, Siddhartha Banerjee, and Ramesh Johari.
\newblock Pricing in ride-sharing platforms: A queueing-theoretic approach.
\newblock In {\em ACM Conference on Economics and Computation}, page 639, 2015.

\bibitem[\protect\citeauthoryear{Schwartz \bgroup \em et al.\egroup
  }{2017}]{schwartz2017customer}
Eric~M Schwartz, Eric~T Bradlow, and Peter~S Fader.
\newblock Customer acquisition via display advertising using multi-armed bandit
  experiments.
\newblock {\em Marketing Science}, 36(4):500--522, 2017.

\bibitem[\protect\citeauthoryear{Tardos}{1985}]{tardos1985strongly}
{\'E}va Tardos.
\newblock A strongly polynomial minimum cost circulation algorithm.
\newblock {\em Combinatorica}, 5(3):247--255, 1985.

\bibitem[\protect\citeauthoryear{Thompson}{1933}]{thompson1933likelihood}
William~R Thompson.
\newblock On the likelihood that one unknown probability exceeds another in
  view of the evidence of two samples.
\newblock {\em Biometrika}, 25(3/4):285--294, 1933.

\bibitem[\protect\citeauthoryear{Wang and Zhang}{2011}]{marginal3}
Jian Wang and Yi~Zhang.
\newblock Utilizing marginal net utility for recommendation in e-commerce.
\newblock In {\em Proceedings of the 34th international ACM SIGIR conference on
  Research and development in Information Retrieval}, pages 1003--1012, 2011.

\bibitem[\protect\citeauthoryear{Wang \bgroup \em et al.\egroup
  }{2018a}]{wang2018efficient}
Jian Wang, William~M Wells, Polina Golland, and Miaomiao Zhang.
\newblock Efficient {L}aplace approximation for bayesian registration
  uncertainty quantification.
\newblock In {\em International Conference on Medical Image Computing and
  Computer-Assisted Intervention}, pages 880--888. Springer, 2018.

\bibitem[\protect\citeauthoryear{Wang \bgroup \em et al.\egroup
  }{2018b}]{wang2018deep}
Zhaodong Wang, Zhiwei Qin, Xiaocheng Tang, Jieping Ye, and Hongtu Zhu.
\newblock Deep reinforcement learning with knowledge transfer for online rides
  order dispatching.
\newblock In {\em 2018 IEEE International Conference on Data Mining (ICDM)},
  pages 617--626. IEEE, 2018.

\bibitem[\protect\citeauthoryear{Yan \bgroup \em et al.\egroup
  }{2018}]{yan2018dynamic}
Chiwei Yan, Helin Zhu, Nikita Korolko, and Dawn Woodard.
\newblock Dynamic pricing and matching in ride-hailing platforms.
\newblock {\em Naval Research Logistics, Forthcoming}, 2018.

\bibitem[\protect\citeauthoryear{Zhao and Zheng}{2000}]{poisson1}
Wen Zhao and Yu-Sheng Zheng.
\newblock Optimal dynamic pricing for perishable assets with nonhomogenous
  demand.
\newblock {\em Management Science}, 46:375--388, 03 2000.

\bibitem[\protect\citeauthoryear{Zhao \bgroup \em et al.\egroup
  }{2015}]{marginal2}
Tongtiegang Zhao, Jianshi Zhao, Pan Liu, and Xiaohui Lei.
\newblock Evaluating the marginal utility principle for long-term hydropower
  scheduling.
\newblock {\em Energy Conversion and Management}, 106:213--223, 2015.

\end{thebibliography}
\end{small}

\clearpage

\appendix

\setcounter{section}{0}
\renewcommand{\thesection}{\Alph{section}}

\section{Omitted Proofs}


\subsection{Proof of Theorem \ref{thm:nphard}}
\label{appendix:proof:nphard}

\begin{proof}
    In this proof, we set all costs to be zero, so the reward is equivalent to the revenue.
    
    We notice that even for states that are not adjacent, the reward function $r(n;e_\ss^{(\mathrm{w})})$ is still well-defined if for all states $t$ on a path from $s$ to $s'$, all drivers visiting $t$ must visit $s$ before and visit $s'$ after. In this concept, we can regard it as a ``virtual arc'' as long as they do not intervene with each other. Then, we reduce Set Cover 
    \cite{Karp1972} 
    to Maximum Revenue Car Dispatching.
    
    \begin{lemma}
        For $n, A\in \mathbb{N}, n\ge 1$, we can construct an arc $(s,s')$ in polynomial time and size with $r(x;e_\ss^{(\mathrm{w})}) = A\cdot 1_{x\ge n} + C_1$ for $x\in [n]$, in which $C_1$ is a constant only dependent on $A$ and $n$. \label{lemma:cons1}
    \end{lemma}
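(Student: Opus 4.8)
The plan is to realize the desired step reward by directly prescribing the sorted valuation profile of the $n$ latent orders we place on the arc $(s,s')$. Since all costs are set to zero in this reduction, the reward of dispatching $x$ drivers with riders along the arc is simply $r(x;e_\ss^{(\mathrm{w})}) = x\cdot v_x(s,s')$, where $v_x(s,s')$ is the $x$-th largest valuation among the orders on the arc. Designing the reward therefore amounts to inverting this relation: to obtain $r(x)=C_1$ for $x\in\{1,\dots,n-1\}$ and $r(n)=A+C_1$, I would set $v_x(s,s')=C_1/x$ for $1\le x\le n-1$ and $v_n(s,s')=(A+C_1)/n$.

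First I would verify that this prescribed profile is a legitimate sorted valuation list, i.e.\ $v_1\ge v_2\ge\dots\ge v_n\ge 0$. The sequence $C_1/x$ is automatically non-increasing, so the only binding constraint is $v_{n-1}\ge v_n$, i.e.\ $C_1/(n-1)\ge (A+C_1)/n$, which rearranges to $C_1\ge (n-1)A$. I would thus fix $C_1:=(n-1)A$, the smallest admissible value, under which $v_{n-1}=v_n=A$ and the whole profile is non-increasing and non-negative; note that $C_1$ depends only on $A$ and $n$, as required.

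Then I would instantiate the gadget: create an admissible arc $(s,s')$ (either a direct edge, or, invoking the virtual-arc device introduced just above, a path of states through which every participating driver is forced to travel in order), place exactly $n$ latent orders on it with the valuations above (so $u(e_\ss^{(\mathrm{w})})=o(s,s')=n$). A direct substitution then gives $r(x)=x\cdot(C_1/x)=C_1$ for $x<n$ and $r(n)=n\cdot(A+C_1)/n=A+C_1$, that is $r(x;e_\ss^{(\mathrm{w})})=A\cdot\mathbf{1}_{x\ge n}+C_1$ on $[n]$. Finally I would confirm the polynomial size and construction time: the gadget uses $n$ orders and $O(1)$ states, and each valuation is a rational $C_1/x$ with numerator and denominator bounded by $\mathrm{poly}(n,A)$, hence of polynomial bit-length.

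The construction itself is elementary; the only genuine obstacle is the monotonicity requirement on the sorted valuations, which is exactly what forces $C_1\ge (n-1)A$ and explains why the additive offset $C_1$ (rather than a pure bump at $x=n$) is unavoidable in such a gadget. If the surrounding reduction additionally insists on integer valuations, I would scale the entire profile by $L:=\mathrm{lcm}(1,\dots,n)$, whose bit-length is $O(n)$ and thus preserves polynomiality; since $x\mid L$ for every $x\le n$ and $n\mid L$, all scaled valuations become integers, and the scaling merely replaces $A$ and $C_1$ by $LA$ and $LC_1$, which still depend only on $A$ and $n$.
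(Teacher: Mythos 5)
Your proposal is correct and ends up with exactly the paper's construction: choosing $C_1=(n-1)A$ gives the valuation profile $\frac{(n-1)A}{1},\frac{(n-1)A}{2},\dots,\frac{(n-1)A}{n-1},A$, which is precisely the one the paper places on the arc. The only difference is presentational—you derive the profile by inverting $r(x)=x\cdot v_x$ and make explicit the monotonicity check and polynomial-size bookkeeping that the paper leaves implicit.
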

    
    \begin{proof}[Proof of Lemma \ref{lemma:cons1}]
        We create an arc $(s,s')$ with $n$ orders of valuation $\frac{n-1}{1}A,\frac{n-1}{2}A,\cdots,\frac{n-1}{n-1}A,A$. Then it satisfies the condition with $C_1 = A\cdot (n-1)$.
    \end{proof}
    
    \begin{lemma}
        For $n, A\in \mathbb{N}, n\ge 1$, we can construct a virtual arc $(s,s')$ in polynomial time and size with $r(x+(n-1);e_\ss^{(\mathrm{w})}) = A\cdot \max\{0, x-1\} + C_2$ for $0\le x \le n$, in which $C_2$ is a constant only dependent on $A$ and $n$. We call it a $(A,n)$-virtual arc. \label{lemma:cons2}
    \end{lemma}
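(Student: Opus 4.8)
The plan is to exploit the fact that, once all costs are set to zero (as this proof does), the reward collected by accepting $y$ orders on an arc is exactly $r(y)=y\cdot v_y$, where $v_y$ is the $y$-th largest valuation on that arc. Hence realizing a prescribed reward function amounts either to choosing a valid (non-increasing) valuation sequence on a single arc, or, more modularly, to composing the step-function arcs already produced by Lemma~\ref{lemma:cons1}. I would take the composition route as the primary argument, since it reuses Lemma~\ref{lemma:cons1} and directly motivates the ``virtual arc'' terminology.

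Rewriting the shift $y=x+(n-1)$, the target reward at flow $y$ is $A\max\{0,y-n\}+C_2$, a convex piecewise-linear function whose marginal increments are $0$ up to flow $n$ and then $A$ per additional unit. The first step is the decomposition $A\max\{0,y-n\}=\sum_{k=n+1}^{2n-1}A\cdot 1_{y\ge k}$, which is valid for every $y\le 2n-1$. For each threshold $k\in\{n+1,\dots,2n-1\}$, Lemma~\ref{lemma:cons1} (applied with parameters $A$ and $k$) furnishes an arc whose reward equals $A\cdot 1_{y\ge k}+(k-1)A$. I would then chain these $n-1$ arcs in series to form a single virtual arc $(s,s')$; because the drivers passing through all carry the same flow and the segments do not interfere, the reward of routing $y$ drivers is the sum of the per-segment rewards, which telescopes to $A\max\{0,y-n\}+C_2$ with $C_2=A\sum_{k=n+1}^{2n-1}(k-1)$, a quantity depending only on $A$ and $n$. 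Re-parametrizing back to $x$ yields precisely the claimed formula, and the total size is $O(n)$ arcs of $O(n)$ orders each, hence polynomial.

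The step I expect to be the main obstacle is justifying the additive composition together with the saturation behavior: a component arc built from Lemma~\ref{lemma:cons1} carries only $k$ orders, yet the virtual arc must route up to $2n-1$ drivers. I would argue that when more than $k$ drivers traverse that segment, the surplus drivers simply cruise without a rider, so the segment's contribution saturates at its maximum $kA$, which still equals $A\cdot 1_{y\ge k}+(k-1)A$; thus the step form persists for all $y\ge 1$ and the sum is exactly as claimed. Care is also needed to confirm that the chained arcs meet the non-interference requirement of the virtual-arc definition, so that their rewards genuinely add rather than being coupled through shared intermediate states.

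As a cleaner self-contained alternative, I would observe that a single arc already suffices: take $n-1$ orders of valuation $\tfrac{n}{n-1}A$ together with $n$ orders of valuation $A$. Then $v_y$ is non-increasing (since $\tfrac{n}{n-1}A\ge A$), and $r(y)=y\cdot v_y$ equals $Ay$ for $y\ge n$ and $\tfrac{n}{n-1}Ay$ for $y<n$, which matches the target on $\{n-1,\dots,2n-1\}$ with $C_2=nA$. Here the only thing left to verify is the monotonicity of the valuation sequence, which isolates precisely the point at which such a construction could fail, and makes the feasibility of the gadget transparent.
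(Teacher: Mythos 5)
Your proof is correct, but it takes a genuinely different route from the paper's. The paper realizes the $(A,n)$-virtual arc by a \emph{parallel} composition: for each $i\in[n-1]$ it builds a two-hop path $s\to s_i\to s'$ whose first arc is a Lemma~\ref{lemma:cons1} gadget with a step of height $iA$, and it then must argue both an upper bound on $\sum_i r(x_i;e_{s,s_i}^{(\mathrm{w})})$ over \emph{all} ways of splitting the flow $\sum_i x_i = x+(n-1)$ among the paths, and that this bound is attained by the particular split $x_i = 1 + x\cdot 1_{i=x-1}$. Your primary construction is instead a \emph{series} composition with steps of uniform height $A$ at distinct thresholds $k=n+1,\dots,2n-1$: flow conservation forces every driver through every segment, so the reward is a plain sum $\sum_{k=n+1}^{2n-1}\left(A\cdot 1_{y\ge k}+(k-1)A\right)$ that telescopes to $A\max\{0,y-n\}+C_2$ for $y\le 2n-1$, with no optimization over flow splits at all; this is what your route buys relative to the paper's, whose inequality bookkeeping is exactly where its proof gets delicate. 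The two points you flag as needing care are the right ones, and both hold: a segment's contribution saturates at $kA$ once $y\ge k$ because surplus drivers cruise riderless at zero cost, and exiting the chain early is never profitable since segment rewards are non-negative and non-decreasing, so the optimal circulation indeed routes all drivers to $s'$ and the per-segment rewards add. Your single-arc alternative is simpler still, and simpler than the paper's own proof: with $n-1$ orders of valuation $\tfrac{n}{n-1}A$ and $n$ orders of valuation $A$, the identity $r(y;e_{s,s'}^{(\mathrm{w})})=y\cdot v_y$ gives exactly the target with $C_2=nA$ on the required domain $y\in\{n-1,\dots,2n-1\}$, uses no intermediate states (so non-interference is vacuous), and needs only $O(n)$ orders versus the $O(n^2)$ of both composition-based gadgets. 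The only caveat is the degenerate case $n=1$: there the valuation $\tfrac{n}{n-1}A$ is undefined, and a single order of valuation $A$ would give $r(0)=0\neq A=r(1)$ while the target demands a constant; for $n=1$ the correct gadget is the empty arc with $C_2=0$, which is what your chain construction (and the paper's parallel one) degenerates to automatically.
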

    
    \begin{proof}[Proof of Lemma \ref{lemma:cons2}]
        For each $i\in [n-1]$, we construct an arc $(s,s_i)$ with $r(x+(n-1);e_{s,s_i}^{(\mathrm{w})}) = A(i\cdot 1_{x_i\ge i+2} + C_1^{(i)})$ for $x_i>0$ by Lemma \ref{lemma:cons1}, and an arc $(s_i,s')$ with no reward. Then, as it is straightforward to see $i\cdot 1_{x_i\ge i+1} \le x_i - 1 - 1_{x_i\ge 2}$ for $x_i>0$ and $C_1^{(i)}-1 =i(i+1) - 1>0$ for $x_i = 0$, we always have
        
        \begin{equation*}
        r\left(x_i;e_{s,s_i}^{(\mathrm{w})}\right) \le A (x_i - 1 - 1_{x_i\ge 2} + C_1^{(i)}).
        \end{equation*}
        
        Therefore,
        \begin{align*}
            &~~r\left(\sum_{i=1}^{n-1} x_i; e_\ss^{(\mathrm{w})}\right) = \sum_{i=1}^{n-1} r(x_i;e_{s,s_i}^{(\mathrm{w})}) \\
            &\le A\sum_{i=1}^{n-1} (x_i - 1 - 1_{x_i\ge 2} + C_1^{(i)}) \\
            &= A\left(\sum_{i=1}^{n-1} x_i - (n-1) - \sum_{i=1}^{n-1}1_{x_i\ge 2} + \sum_{i=1}^{n-1}C_1^{(i)}\right).
        \end{align*} 
        We let $x = \sum_{i=1}^{n-1} x_i - (n-1)$, $C_2 = A\sum_{i=1}^{n-1}C_1^{(i)}$, and notice that $\sum_{i=1}^{n-1}1_{x_i\ge 2} \ge 1_{x\ge 1}$. Then we get:
        \begin{align*}
            &~~r(x+(n-1);e_\ss^{(\mathrm{w})}) \\
            &\le A(x - 1_{x\ge 1}) + C_2 \\
                        &= A\cdot \max\{0, x-1\} + C_2, 0\le x\le n.
        \end{align*}
        When we let $x_i = 1 + x \cdot 1_{i=x-1}$, the equality holds.
    \end{proof}
    
    Now consider an instance of the set cover problem with the set $A=\{a_1,\cdots,a_n\}$, a family $\mathcal{K}=\{K_1,\cdots,K_m\}$ of subsets of $A$. Now we construct the Maximum Revenue Car Dispatching problem with $S = A \cup \mathcal{K} \cup \{O\}$. In the initiation, on each $a_i\in A$ we assign 1 driver, and for each $K_j\in \mathcal{K}$, we assign $|K_j|-1$ drivers. Then, for each $a_i \in K_j$, we add an edge $(a_i,K_j)$ with one order of valuation 1, and for each $K_j$, we add an $(1,|K_j|)$-virtual arc $(K_j,O)$ as in Lemma \ref{lemma:cons2}, with the respective $C_2$ denoted as $C_{(j)}$. Then, the drivers initially in $K_j$ will go straight to $O$ getting $C_{(j)}$ reward, for a total of $C_3 := \sum_{j=1}^m C_{(j)}$.
    
    Now we consider the routes of drivers initially in $A$. Each order from $A$ to $\mathcal{K}$ earns 1, and the reward from any $K_j$ to $O$ is non-decreasing, so in an optimal plan all drivers must reach $O$, and all nodes in $\mathcal{K}$ visited by some drivers from $A$ form a set cover of $A$. For each fixed $K_j$, if $x\ge 1$ drivers from $A$ visit $K_j$, the virtual arc $(K_j, O)$ will earn an additional reward of $x-1$. Therefore, if totally $k$ nodes in $\mathcal{K}$ are visited, the total revenue is:
    \[
    2n + C_3 -k.
    \]
    Therefore, if we can compute the optimal plan for this instance of Maximum Revenue Car Dispatching, we find the optimal solution to Set Cover, so there is no polynomial time algorithm for general Maximum Revenue Car Dispatching unless $\mathrm{P}=\mathrm{NP}$.
\end{proof}


\subsection{Proof of Lemma \ref{lem:potential}}
\label{appendix:proof_lem_potential}

\begin{proof}

    We firstly prove the ``only if'' direction, i.e. a reward re-allocation is fair only if there exists a potential function satisfying the condition (1-4):
    
    We assume that the plan $\mathcal{P}$ is fair. Because $\mathcal{P}$ is envy-free, for every driver who visits the same state $s$, their utility obtained from $s$ to the end must be the same, and we denote it as $P(s)$. We only consider states visited by at least one driver. For all $s$ not visited by any driver, we assign $P(s)=0$.
    
    In condition 3, let $d$ be a driver who drives through the arc $(\ss)$. By construction rule of $P$, the utility of $d$ from $s$ to the end is $P(s)$ and the utility of $d$ from $s'$ to the end is $P(s')$, so the net income of $d$ driving from $s$ to $s'$ must be $P(s)-P(s')$. According to the individual rationality requirement in fair re-allocation, $P(s)-P(s')\ge 0$. So condition 3 holds.
    
    If condition 2 is violated, then from condition 3 we know that $F(s,s')=0$. Then, for any driver at $s$, when he/she deviates the route and drives to $s'$ instead, he/she will benefit from the deviation. Contradiction. So condition 2 holds.
    
    In condition 1, we consider a driver $d$ leaving the platform at $s$, then $d$ will not earn any net income further. By construction rule of $P$ we know $P(s)=0$.
    
    In condition 4, the LHS is the summation of total net income of all drivers and the RHS is the summation of revenue of the platform, so it is equivalent to budget-balance condition. Therefore condition 4 holds.
    
    In conclusion, we can construct a potential function $P$ from any fair reward re-allocation.

\medskip 

    Then we prove the ``if'' direction, i.e. a reward re-allocation is fair if there exists a potential function satisfying the condition (1-4):
    
    If a re-allocation scheme is not fair, then at least one of budget-balance, individual rationality, subgame-perfectness and envy-freeness is violated. We assume there still exists a potential function $P$ satisfying all conditions.
    
    If envy-freeness is violated, then there must exist a state $s$ such that two drivers earn different net incomes from $s$ to the end. By condition 3, all drivers at state $s$ earn a net income of $P(s)$ from $s$ until he/she leaves the platform if he/she follows the dispatching plan. Contradiction.
    
    If subgame-perfectness is violated, then there must be a state $s$ where a driver may deviate and improve his/her utility. Then condition 2 is violated. Contradiction.
    
    If budget-balance is violated, then condition 4 is violated. Contradiction.
    
    If individual-rationality is violated, then the ``$\ge 0$'' constraint in condition 3 is violated. Contradiction.
    
    In conclusion, if a re-allocation is not fair, such $P$ satisfying all conditions does not exist.
\end{proof}


\section{Approximate NLWC Algorithm for Non-regular Cases}
\label{appendix:algo:non_regular}

To make the edge reward function concave, we essentially construct the \emph{concave envelope} of $r(\cdot;e_\ss^{\mathrm(w)})$, as the least-valued concave function $\overline{r}(\cdot;e_\ss^{\mathrm(w)})$ not less than it. To compute $\overline{r}(\cdot;e_\ss^{\mathrm(w)})$, we only need to solve a linear program with decision variables $\{\overline{r}(i;e_\ss^{\mathrm(w)})\}_{i\in [o(\ss)]}$:

\begin{center}
\fbox{\begin{minipage}[t]{0.95\columnwidth}%
\begin{small}
\begin{align}
    & \textup{Minimize~} \sum_{i=1}^{o(\ss)}  \overline{r}(i;e_\ss^{\mathrm(w)}) \nonumber \\
    & \textup{Subject to~ }   \overline{r}(i;e_\ss^{\mathrm(w)})\ge r(i;e_\ss^{\mathrm(w)}), ~~ \forall i\in [o(s,s')]  \\
    & \qquad 2\overline{r}(i;e_\ss^{\mathrm(w)}) \ge \overline{r}(i-1;e_\ss^{\mathrm(w)}) + \overline{r}(i+1;e_\ss^{\mathrm(w)}), \\
    & \qquad \qquad \qquad \qquad \qquad~~ i = 2,\cdots,o(s,s')-1. \nonumber
\end{align}
\end{small}
\end{minipage}}
\end{center}

Then, in Line 7 of Algorithm A1, we compute $w$ with $\overline{r}$ instead, thus making $w$ non-increasing and getting an approximated regular NLWC instance for computation of approximated Maximum Revenue Car Dispatching.

\section{An Example for Merit of Two-Phase Pricing}
\label{appendix:example_two_phase}

Through Wuhan City runs the Yangtze River, across which there had been only two bridges in 2000s. During rush hour, people traveling across the river crowded the bridges and made the traffic extremely heavy. As taxis would struggle in crossing the river, which would take a long time and increase both fuel and opportunity costs, taxi drivers frequently refused trips and made citizens complain.\footnote{See https://www.wsj.com/articles/SB10001424052702303330

204579247731532836694}

Consider three locations $W_1,W_2,H$, in which $W_1,W_2$ are far apart but both in Wuchang on the same side of the river, while $H$ is just opposite to $W_1$ across the No.2 Yangtze Bridge in Hankou. By the taxi pricing system, the fare from $W_1$ to $W_2$ is $p(W_1,W_2)=20$, while $p(W_1,H)=10$. However, due to distinct traffic conditions, the costs are $c(W_1,W_2)=10, c(W_1,H)=8$. As $p(W_1,W_2)-c(W_1,W_2)>p(W_1,H)-c(W_1,H)$, taxi drivers would not be willing to cross the river.

Suppose there were two riders who would travel from $W_1$ to $W_2$ and $H$, respectively, with valuations the same as taxi fares. In the conventional mechanism, two drivers would take one order each, but one earns $10$ while the other earns only $2$, making the latter complain or even refuse the trip. If we enforce envy-freeness, as we cannot increase the $p(W_1,H)$ (otherwise the price exceeds the rider's valuation and the rider would not take the trip), we can only lower $p(W_1,W_2)$ to $12$, which just lowers the revenue and makes drivers \textbf{equally unsatisfied}. 

In our two-phase pricing mechanism, we can compute the potentials $P(W_1)=6,P(W_2)=P(H)=0$. Therefore, $r(W_1,W_2)=16,r(W_1,H)=14$, so no matter which trip they choose, they always earn $6$. This kind of redistribution has not been possible until ridesharing platforms occur, but does make drivers envy-free and alleviate the problem that drivers are not willing to cross the river (and other situations of heavy traffic) without modifying rider-side pricing or total revenue, improving both parties' experience.


\section{Computing the Reward Functions for the Gaussian-Poisson Demand Distribution} 

\subsection{The Gaussian-Poisson Demand Distribution.} \label{app:gaussian-poisson}

In literature on pricing under stochastic demands, it is common to model the stochastic arrivals as Poisson processes~\cite{jiang2005estimating,poisson1,poisson2,poisson3}, and assume that the agents' undisclosed valuations of follow the normal distribution~\cite{jiang2005estimating,kashyap2018auction}. In light of this, we define the parametric \emph{Gaussian-Poisson distribution} for $\calD(\ss)$, which is both practically useful and easy to learn. If $\calD(\ss)$ is the Gaussian-Poisson distribution with parameters $(\mu_{\ss},\sigma_{\ss},\lambda_{\ss})$, we have that $x_\ss\sim \Pois(\lambda_{\ss})$ and each $v_t\sim \mathcal{N}(\mu_{\ss},\sigma^2_{\ss})$, where $\Pois(\lambda)$ and $\mathcal{N}(\mu,\sigma^2)$ respectively denote the Poisson and the Gaussian distribution.

For parametrization of the demand distribution, in our experiments in stochastic setting and online learning setting, we assume the latent orders obey Gaussian-Poisson distribution.

\subsection{Computation of Reward Functions} \label{app:compute-reward-function-gaussian-poisson}

Fix any arc $(\ss)$ and the distribution parameters $(\mu_\ss,\sigma_\ss,\lambda_\ss)$. Denote $\Phi(\cdot)$ as the cumulative distribution function (cdf) of standard Gaussian distribution. If we offer a price $p$, since valuations of the latent orders on the arc obey $\mathcal{N}(\mu_\ss,\sigma_\ss^2)$, each latent order has a valuation greater than $p$ independently with probability
$(1-\Phi(\frac{p-\mu_\ss}{\sigma_\ss}))$. For convenience, we also refer to these orders as \emph{qualified}.  

The following lemma characterizes the number of the qualified orders on the given arc. 

\begin{lemma}\label{lem:distribution}
Let $\tilde{x}(p)$ denote the the number of the qualified orders on the arc $(\ss)$. $\tilde{x}(p)$ follows $\Pois(\tilde{\lambda}(p; \ss))$ where
\[
\tilde{\lambda}(p;\ss) := \left(1-\Phi(\frac{p -\mu_\ss}{\sigma_\ss})\right) \lambda_{\ss}.
\]
\end{lemma}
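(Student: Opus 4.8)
The plan is to establish Lemma~\ref{lem:distribution} via the classical \emph{thinning} (or \emph{splitting}) property of the Poisson distribution. The statement is that if the total number of latent orders $x_\ss$ follows $\Pois(\lambda_\ss)$, and each order is independently ``qualified'' (valuation at least $p$) with probability $P := 1-\Phi(\frac{p-\mu_\ss}{\sigma_\ss})$, then the number of qualified orders is itself Poisson with rate $P\lambda_\ss$. This is exactly the thinning theorem, and the proof is a direct computation using the probability mass functions already introduced in the paper.

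Concretely, I would first recall from the main text that the probability mass function of the qualified-order count is
$\mathcal{P}(i;\ss,p)=\sum_{j=0}^\infty b(i,j;P)\,\Pr[x_\ss=j]$, where $b(i,j;P)=\binom{j}{i}P^i(1-P)^{j-i}$ and, under the Gaussian-Poisson model, $\Pr[x_\ss=j]=e^{-\lambda_\ss}\lambda_\ss^{\,j}/j!$. The key step is then to substitute these expressions and simplify the infinite sum. Writing out $\mathcal{P}(i;\ss,p)=\sum_{j=i}^\infty \binom{j}{i}P^i(1-P)^{j-i}\,e^{-\lambda_\ss}\lambda_\ss^{\,j}/j!$, I would expand the binomial coefficient, cancel the $j!$ against $\binom{j}{i}$, and substitute $k=j-i$ so that the sum factors into $\frac{(P\lambda_\ss)^i}{i!}e^{-\lambda_\ss}\sum_{k=0}^\infty \frac{((1-P)\lambda_\ss)^k}{k!}$. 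Recognizing the remaining sum as the Taylor series of $e^{(1-P)\lambda_\ss}$ collapses the exponentials into $e^{-P\lambda_\ss}$, yielding $\mathcal{P}(i;\ss,p)=\frac{(P\lambda_\ss)^i}{i!}e^{-P\lambda_\ss}$, which is precisely the pmf of $\Pois(P\lambda_\ss)=\Pois(\tilde\lambda(p;\ss))$.

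The only genuinely model-specific input is the value of $P$: I would note that because each valuation is drawn i.i.d.\ from $\mathcal{N}(\mu_\ss,\sigma_\ss^2)$, the event $\{v_t\ge p\}$ has probability $1-\Phi(\frac{p-\mu_\ss}{\sigma_\ss})$ by standardizing the Gaussian, and these events are independent across orders and independent of the order count $x_\ss$. This independence is what licenses the binomial splitting in the first place. Since the main obstacle is not conceptual but merely making sure the index shift and the series identity are carried out cleanly, the proof is short; the substitution $k=j-i$ and the recognition of the exponential series are the two places where care is needed to avoid off-by-one or convergence slips, but both are routine. I would conclude by observing that the resulting rate $P\lambda_\ss$ matches the claimed $\tilde\lambda(p;\ss)$, completing the proof.
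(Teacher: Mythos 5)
Your proof is correct, but it takes a genuinely different route from the paper's. You compute the pmf of the qualified-order count directly: conditioning on $x_\ss = j$, summing the binomial-times-Poisson terms, shifting the index via $k = j-i$, and recognizing the exponential series so that everything collapses to $\frac{(P\lambda_\ss)^i}{i!}e^{-P\lambda_\ss}$, the $\Pois(P\lambda_\ss)$ pmf. The paper instead argues via probability generating functions: it reduces the lemma to the statement that the number of heads among $x \sim \Pois(\lambda)$ independent coin tosses is $\Pois(\lambda p)$, composes the Poisson pgf $G_1(t) = e^{\lambda(t-1)}$ with the Bernoulli pgf $G_2(t) = (1-p)+pt$, and identifies $G_1(G_2(t)) = e^{\lambda p(t-1)}$ as the pgf of $\Pois(\lambda p)$. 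Your direct computation is more elementary and self-contained --- it needs only the Taylor series of the exponential and connects explicitly to the pmf formula $\mathcal{P}(i;\ss,p)$ already stated in the main text --- at the cost of some index bookkeeping. The paper's pgf argument is shorter and avoids that bookkeeping, but it implicitly invokes the uniqueness theorem for probability generating functions (and the compound-distribution composition rule), and as a method it generalizes more readily to other thinned or compound distributions. Both proofs correctly isolate the one model-specific ingredient, namely $P = 1-\Phi(\frac{p-\mu_\ss}{\sigma_\ss})$ from standardizing the Gaussian valuations, together with the independence of the qualification events from each other and from $x_\ss$, which is what licenses the binomial splitting in the first place.
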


\begin{proof}
We equivalently prove the following statement: for $x\sim \Pois(\lambda)$ we toss $x$ coins each with head probability $p$, then the number of heads of all coins tossed obeys distribution $\Pois(\lambda p)$.

    The probability generating function of $\Pois(\lambda)$ is
    \begin{equation}
        \begin{aligned}
        G_1(t) &= \sum_{i=0}^{+\infty} e^{-\lambda}\frac{\lambda^i}{i!} \\
        &= e^{\lambda(t-1)}.
        \end{aligned}
    \end{equation}
    
    For every coin the probability generating function of the number of heads is
    \begin{equation}
        G_2(t) = (1-p) + pt.
    \end{equation}
    
    Then the probability generating function of the total number of tails is
    \begin{equation}
        \begin{aligned}
            G_1(G_2(t)) &= e^{\lambda((1-p+pt)-1)} \\
            &= e^{\lambda p(t-1)},
        \end{aligned}
    \end{equation}
    identical to the probability generating function of $\Pois(\lambda p)$.
    
    Therefore, the number of heads obeys the distribution $\Pois(\lambda p)$.
\end{proof}

For $\tilde{\lambda} = \tilde{\lambda}(p;\ss)$, we define the function 
\[
\Theta(n,\tilde{\lambda}) := n - \sum_{i=0}^{n-1} (n-i)\frac{\tilde{\lambda}^i}{i!}e^{-\tilde{\lambda}}.
\]
We have that $\Theta(n,\tilde{\lambda})$ is the expected number of the fulfilled orders on the arc if we dispatch $n$ drivers. This is because the expected number of fulfilled orders is
\begin{align*}
 &\sum_{i=0}^{\infty} \min\{n,i\} \cdot \Pr[\tilde{x}(p)=i] \\
&= \sum_{i=0}^{\infty} (n \cdot \Pr[\tilde{x}(p)=i]) - \sum_{i=0}^{n-1} ((n - i) \cdot \Pr[\tilde{x}(p)=i]) \\
&= n - \sum_{i=0}^{n-1} (n-i)\frac{\tilde{\lambda}^i}{i!}e^{-\tilde{\lambda}} = \Theta(n,\tilde{\lambda}).
    \end{align*} 

Finally, we use the functions defined above to compute $\mathcal{R}(n,p;\ss)$, and derive the calculation methods for the edge reward function $r(\cdot;e_\ss^{\left(\mathrm{w}\right)})$ as follows.

\begin{theorem}\label{thm:reduction}
If $\calD(s,s')$ follows the Gaussian-Poisson distribution with parameters $(\mu_\ss,\sigma_\ss,\lambda_\ss)$, then
\begin{align}
r(i;e_\ss^{\left(\mathrm{w}\right)})
 =\max_{p \in \mathbb{R}^{\ge 0}}\left\{ \Theta\left(i,\tilde{\lambda}(p;\ss)\right)p\right\}-c{(\ss)}i. \label{eqn:stoc:rev:function}
\end{align}
\end{theorem}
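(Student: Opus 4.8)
The plan is to unfold the definition of $r(i;e_\ss^{(\mathrm{w})})$ and match each factor against the quantities already established for the Gaussian-Poisson model, so that the claimed formula \eqref{eqn:stoc:rev:function} emerges as a bookkeeping identity. First I would recall from Definition~\ref{def:stochastic-mrcd} that on a ``driving with a rider'' edge the reward is $r(i;e_\ss^{(\mathrm{w})}) = \max_{p\in\R^{\ge 0}}\mathcal{R}(i,p;\ss)$, and that the expected revenue was defined as $\mathcal{R}(i,p;\ss) = p\cdot\E[\tilde{u}(i;\ss,p)] - c(\ss)\cdot i$. Since the cost term $c(\ss)\cdot i$ does not depend on the price $p$, I would pull it outside the maximization, reducing the goal to showing $\E[\tilde{u}(i;\ss,p)] = \Theta\!\left(i,\tilde{\lambda}(p;\ss)\right)$ for each fixed $p$, after which restoring the cost term gives the statement directly.

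Next I would justify the identification $\tilde{u}(i;\ss,p) = \min\{i,\tilde{x}(p)\}$, where $\tilde{x}(p)$ denotes the number of qualified orders (those of valuation at least $p$): dispatching $i$ drivers to the arc, the platform fulfills at most $i$ orders and at most as many as are willing to pay $p$, so the number served is the smaller of supply and qualified demand. Taking expectations yields $\E[\tilde{u}(i;\ss,p)] = \E[\min\{i,\tilde{x}(p)\}]$, which is consistent with the PMF-based expression $\E[\tilde{u}(n;\ss,p)] = \sum_{j} \mathcal{P}(j;\ss,p)\min\{j,n\}$ stated earlier.

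Finally I would invoke Lemma~\ref{lem:distribution}, which shows that under the Gaussian-Poisson model the qualified count satisfies $\tilde{x}(p)\sim\Pois\!\left(\tilde{\lambda}(p;\ss)\right)$, and then apply the closed-form evaluation of the expectation of $\min\{i,\cdot\}$ against a Poisson variable computed immediately before the statement, namely $\E[\min\{i,\tilde{x}(p)\}] = \Theta\!\left(i,\tilde{\lambda}(p;\ss)\right)$. Chaining these equalities gives $\mathcal{R}(i,p;\ss) = \Theta\!\left(i,\tilde{\lambda}(p;\ss)\right)p - c(\ss)\,i$, and maximizing over $p$ (with the cost term already pulled out) produces exactly \eqref{eqn:stoc:rev:function}.

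In this sense the theorem is essentially a corollary that assembles Lemma~\ref{lem:distribution} with the preceding computation of $\Theta$, and no genuine analytic difficulty remains. The only points that warrant care are the modeling identification $\tilde{u}(i;\ss,p)=\min\{i,\tilde{x}(p)\}$ and the observation that a single posted price $p$ is charged to every fulfilled order, so that the revenue factors cleanly as $p\cdot\E[\tilde{u}(i;\ss,p)]$ rather than as a sum of heterogeneous payments; once these are granted, the algebra is routine.
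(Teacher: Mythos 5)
Your proposal is correct and follows essentially the same route as the paper: the paper also treats Theorem~\ref{thm:reduction} as a direct assembly of Definition~\ref{def:stochastic-mrcd} (which gives $r(i;e_\ss^{(\mathrm{w})})=\max_p \mathcal{R}(i,p;\ss)$), Lemma~\ref{lem:distribution} (Poisson thinning of the qualified orders), and the preceding computation showing $\E[\min\{i,\tilde{x}(p)\}]=\Theta\bigl(i,\tilde{\lambda}(p;\ss)\bigr)$, with the price-independent cost term $c(\ss)\,i$ pulled out of the maximization. Your write-up simply makes explicit the chaining of these identities that the paper leaves implicit.
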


\section{Our Online Learning Algorithm}\label{sec:online-learning}

When the distributions $\{\calD(\ss)\}$ of the latent orders are not known beforehand, our scheduling algorithm needs to actively collect data and learn these distributions with better accuracy while pursuing higher revenue. Note that a key component of $\calD(\ss)$ is the riders' valuation distribution on each arc. The scheduling algorithm has to learn the distribution from the partial information that whether a rider has accepted the proposed price on the arc. On the other hand, the amount of partial information revealed about the valuation distribution critically depends on the scheduling algorithm's pricing strategy, as a too high or too low price would result in the riders always accepting or rejecting the offer, which is little useful information. Therefore, the learning-and-optimization algorithm has to carefully price the arcs to balance the two goals of obtaining enough information and securing high revenue. This is also known as the \emph{exploration vs.~exploitation} dilemma in online learning and decision-making.

\vvspace{-0.5ex}
\paragraph{The Thompson Sampling Framework.} To address this challenge, we adopt Thompson sampling (TS), a general online learning and decision-making algorithmic design principle that dates back to \cite{thompson1933likelihood} and proves to be useful in many practical tasks (e.g., \cite{chapelle2011empirical,kawale2015efficient,schwartz2017customer}). Suppose that the scheduling algorithm will run for a time horizon of $\mathcal{T}$ days, and each day forms an independent scheduling task with the identical distributions $\{\calD(\ss)\}$. The scheduling algorithm on day $\tau$ may use the information observed during the first $(\tau-1)$ days to learn $\{\calD(\ss)\}$, and has to make scheduling decisions for day $\tau$, generating revenue as well as new data for future learning. The TS framework usually works with parametric distributions (where we assumed that $\{\calD(\ss)\}$ are Gaussian-Poisson distributions with parameters $\{(\mu_{\ss}, \sigma_{\ss}, \lambda_{\ss})\}$), and maintain a prior distribution for the parameters. On each day $\tau$, TS samples the parameters $\{(\hat{\mu}^{(\tau)}_{\ss}, \hat{\sigma}^{(\tau)}_{\ss})\}$ ($\hat{\lambda}^{(\tau)}_{\ss}$ can be obtained from direct estimation as it is not involved in the exploration-exploitation dilemma) from the prior and correspondingly constructs the estimation $\{\hat{\mathcal{D}}^{(\tau)}(\ss)\}$. An optimal scheduling policy is computed based on $\{\hat{\mathcal{D}}^{(\tau)}(\ss)\}$ and the riders' responses (accept or reject) are observed. The TS algorithm then computes the posterior distribution for the parameters based on the new observation, which also serves as the prior on the next day.

\vvspace{-0.5ex}
\paragraph{Gaussian Priors and Laplace Approximation.} A key choice we have to make in designing the TS algorithm is the specific form of the prior distributions that should simultaneously guarantee the learning performance and facilitate the posterior calculation. In our algorithm, we set the prior distributions for both $\mu_{\ss}$ and $\sigma_{\ss}$ to be independent Gaussian distributions:
\vvspace{-0.5ex}
\[
\mu_{\ss} \sim \mathcal{N}(\mu^\mu_\ss, (\sigma^\mu_\ss)^2),~~ \sigma_{\ss} \sim \mathcal{N}(\mu^\sigma_\ss, (\sigma^\sigma_\ss)^2)
,
\vvspace{-0.5ex}
\]
where $\mu^\mu_\ss$, $\sigma^\mu_\ss$, $\mu^\sigma_\ss$, $\sigma^\sigma_\ss$ can be estimated based on the intrinsic properties of the trip $(s, s')$ (e.g., length, tolls, road quality, etc) without any interaction with the riders.

However, even with the above assumption, the posterior distributions of $\mu_{\ss}$ and $\sigma_{\ss}$ may become a complicated form other than Gaussian, which may lead to further description and computational complexity as the algorithm runs after multiple days. To address this challenge, we adopt the Laplace's method to approximate the potentially complicated posterior by another Gaussian distribution. Such a Laplace approximation method, first proposed by \citet{chapelle2011empirical}, is able to maintain the conjugacy properties for the priors and therefore greatly facilitates the computation. The detailed approximation procedure is derived in Appendix~\ref{app:laplace-approximation}.

\vvspace{-0.5ex}
\paragraph{Algorithm Description.} In Algorithm~\ref{alg:TS}, we describe the details of our TS algorithm. At Line~\ref{line:TS-5}, the $\lambda_{\ss}$ parameter is not involved in the exploration-exploitation dilemma and therefore is learned directly via the maximum likelihood estimate. At Line~\ref{line:TS-8}, the approximate Gaussian posterior is done via the Laplace's method.

\begin{algorithm}[h]
\caption{TS for Maximum Revenue Car Dispatching}
\label{alg:TS}
\begin{algorithmic}[1] 
\State {\bf for} each $(s, s') \in Q$: initialize \begin{small}
\[(\mu^{\mu,(1)}_\ss, \sigma^{\mu,(1)}_\ss, \mu^{\sigma,(1)}_\ss, \sigma^{\sigma,(1)}_\ss) \leftarrow 
(\mu^\mu_\ss, \sigma^\mu_\ss, \mu^\sigma_\ss, \sigma^\sigma_\ss).
\]
\end{small}
\For {$\tau \leftarrow 1, 2, \dots, \mathcal T$}
    \For{$(s, s') \in Q$}
        \State Sample $\hat{\mu}_{\ss}^{(\tau)} \sim \mathcal{N}(\mu^{\mu,(\tau)}_\ss, (\sigma^{\mu,(\tau)}_\ss)^2),~~ \hat{\sigma}_{\ss}^{(\tau)} \sim \mathcal{N}(\mu^{\sigma,(\tau)}_\ss, (\sigma^{\sigma,(\tau)}_\ss)^2)$.
        \State Estimate $\hat{\lambda}_{\ss}^{(\tau)}$ as the daily average of the number of the latent orders on $(s,s')$. \label{line:TS-5}
    \EndFor
    \State Compute the optimal Stochastic Maximum Revenue Car Dispatching (Definition~\ref{def:stochastic-mrcd}) plan for the Gaussian-Poisson demand distribution with parameters $\{(\hat{\mu}_{\ss}^{(\tau)}, \hat{\sigma}_{\ss}^{(\tau)}, \hat{\lambda}_{\ss}^{(\tau)})\}_{s, s'}$, and execute the plan on day $\tau$.
    \State Observe the riders' responses and compute the parameters for the approximate Gaussian posterior $\{(\mu^{\mu,(\tau+1)}_\ss, \sigma^{\mu,(\tau+1)}_\ss, \mu^{\sigma,(\tau+1)}_\ss, \sigma^{\sigma,(\tau+1)}_\ss)\}_{\ss}$. \label{line:TS-8}
\EndFor
\end{algorithmic}
\end{algorithm}

\section{Laplace Approximation for Posterior Computation in the Thompson Sampling Algorithm} \label{app:laplace-approximation}

At the end of day $\tau$, for each arc $(s, s')$, suppose $\{p_i, y_i\}_{i \in n_{\ss}^{(\tau)}}$ is the set of prices and rider responses on the arc in history. We compute the likelihood function
\begin{small}
\[
\calL(\mu,\sigma) = \phi\left(\frac{\mu-\mu^\mu_{\ss}}{\sigma^\mu_{\ss}}\right)\phi\left(\frac{\sigma-\mu^\sigma_\ss}{\sigma^\sigma_\ss}\right)\prod_{i}L\left(\frac{p_i-\mu}{\sigma},y_i\right),
\]
\end{small}
where $\phi(\cdot)$ is the probability density function (pdf) of the standard Gaussian and $L(x,y) := \left\{
\begin{small}
\begin{aligned}
     &\Phi(x),&y=0 \\
     &1-\Phi(x),&y=1 
\end{aligned}
\end{small}
\right.$.

We adopt the Laplace approximation method for multi-variate likelihood \cite{wang2018efficient} to approximate $\calL$ by a product of Gaussian distributions of $\mu$ and $\sigma$. We firstly find the mode of $\log\calL$: 
\[
(\tilde{\mu}^\mu,\tilde{\mu}^\sigma) = \arg \max \log \calL(\mu,\sigma).
\]
Then we use the symmetric difference quotient method 
\cite{lax2014calculus} 
to compute the numerical Hessian of $\log\calL (\mu,\sigma)$  at $(\tilde{\mu}^\mu,\tilde{\mu}^\sigma)$ as 
\[
H = \begin{bmatrix}
    H_{11}       & H_{12} \\
    H_{21}       & H_{22} \\
    \end{bmatrix} = \left.
    \begin{bmatrix}
    \frac{\partial^2 \log \mathcal L}{\partial \mu^2}       & \frac{\partial^2 \log \mathcal L}{\partial \mu\partial \sigma} \\
    \frac{\partial^2 \log \mathcal L}{\partial \mu\partial \sigma}       & \frac{\partial^2 \log \mathcal L}{\partial \sigma^2} \\
    \end{bmatrix} \right|_{(\mu,\sigma) = (\tilde{\mu}^\mu,\tilde{\mu}^\sigma)} .
\]
Finally, we set $\mu^{\mu,(\tau+1)}_{\ss}=\tilde{\mu}^\mu$, $\mu^{\sigma,(\tau+1)}_{\ss}=\tilde{\mu}^\sigma$, and 
$\sigma^{\mu,(\tau+1)}_\ss= \sqrt{-H_{11}^{-1}}$, $\sigma^{\sigma,(\tau+1)}_\ss = \sqrt{-H_{22}^{-1}}$ as the parameters for the approximate Gaussian posterior on day $\tau$, as well as the Gaussian prior on day $(\tau+1)$. Note that $H$ is a negative semi-definite matrix and therefore both $-H_{11}$ and $-H_{22}$ are non-negative.

\section{Additional Experiments} \label{app:add-exp}

\subsection{Regularity of the Gaussian-Poisson Distribution}\label{app:add-exp-regularity}

In this part, we perform numerical experiments to show that the edge reward function is concave for Gaussian-Poisson distributions. Without loss of generality we can set $\mu=1$ (up to normalization). We then choose different $(\sigma,\lambda)$ and verify the convexity of edge reward function. We have made a $330\times 330$ grid for $\sigma\in[0,1.5]$ and $\lambda\in[0,33]$, and verified that at all grid points observe the regularity condition. The range of this grid covers the data appearing in the dataset of Section~\ref{sec:experiments} and the resolution is fine, so it empirically verifies the regularity of the Gaussian-Poisson distribution.

In Figure~\ref{fig:marginal-rewards}, we plot the marginal rewards $v_k'$ with $\mu=1$ and a few representative $\sigma$ and $\lambda$ values. It is easy to see that all curves are monotonically non-increasing with $k$.

\begin{figure}[h]
\centering
\includegraphics[width=0.95\columnwidth]{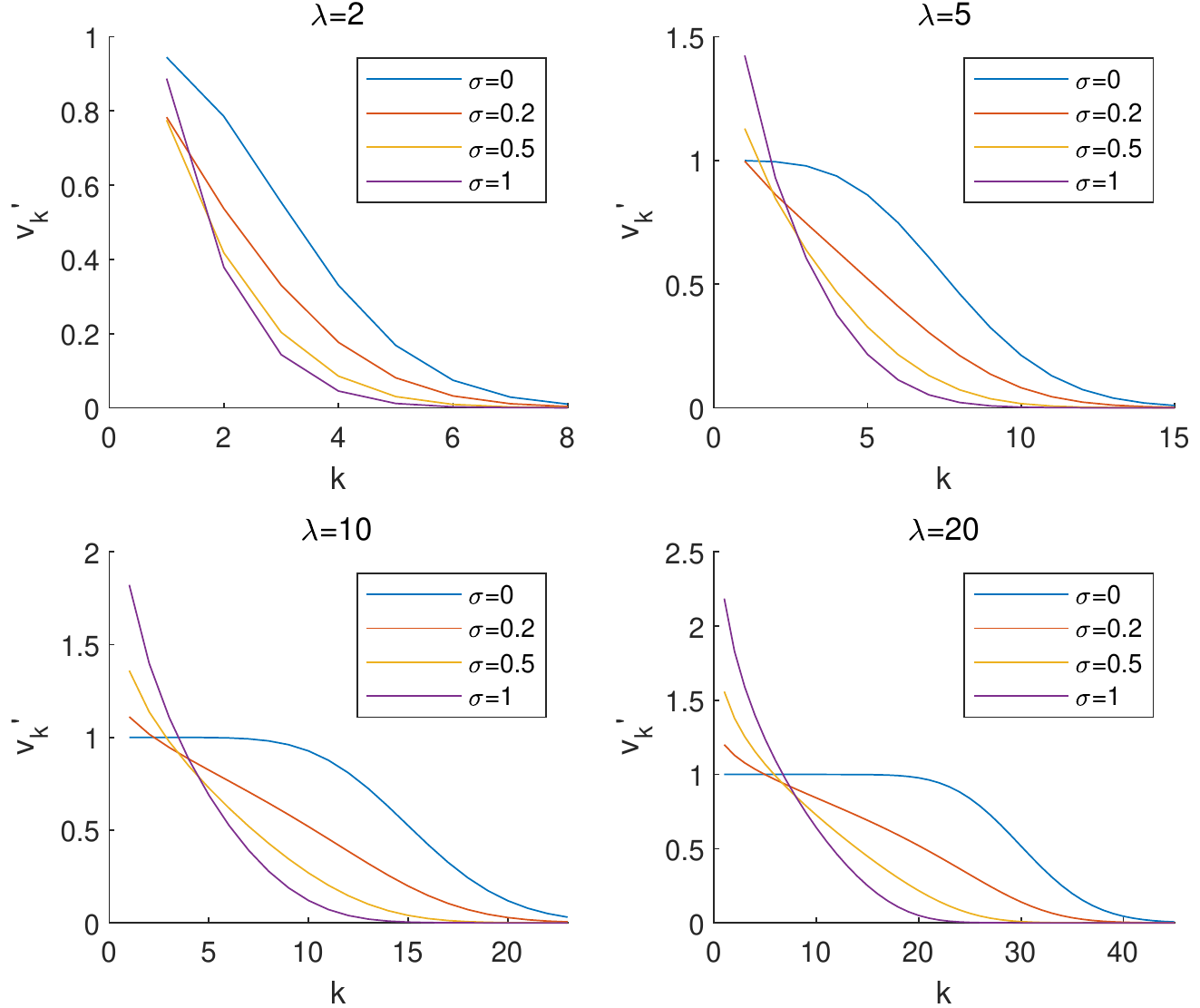} 
\caption{Marginal rewards $v_k'$ with different parameters.}  \label{fig:marginal-rewards}
\end{figure}

\subsection{Experiments for online learning}\label{app:add-exp-robustness}

\vvspace{-1ex}
\paragraph{Online Setting.} When the model parameters are not known before hand, we run our online learning algorithm (in Section~\ref{sec:online-learning}) for 50 days. We refer to the revenue of the algorithm as the \emph{Thompson Sampling value} ({\sf TS}). We also introduce the baseline \emph{exploration-and-exploitation} ({\sf EE}), another common strategy in online learning. In the first $19$ days, {\sf EE} performs exploration where the prices are chosen uniformly in a pre-defined interval, and on day $20$ we learn the model parameters using the first 19-day data, then compute the optimal plan based on the learned parameters for the rest of days.

\vvspace{-1ex}
\paragraph{Results (Online).}

We present the learning curves (the revenue collected on each day) of the online methods in Figure~\ref{fig:exp-learning-curve}. In the figure, we also plot {\sf OV} for reference. We see that {\sf TS} approaches the target {\sf OV} much faster than {\sf EE}.\footnote{To reduce the computational burden, we only update the policy for {\sf TS} in a subset of the 50 days, which results in the observable non-smoothness of the learning curve. If the policy were updated everyday, the performance would be slightly better.} For each online algorithm ${\sf A} \in \{{\sf TS}, {\sf EE}\}$, we define its \emph{average regret} to be
${\sf Reg}({\sf A}) := \frac{1}{50}\sum_{i=1}^{50} ({\sf OV} - {\sf A}(i))$,
where ${\sf A}(i)$ denotes the revenue of {\sf A} on day $i$. ${\sf Reg}({\sf A})$ is a standard metric in online learning that measures the average price paid by {\sf A} on each day to learn and approach the target {\sf OV}. We report that ${\sf Reg}({\sf TS}) = 1.29 \times 10^4$ and ${\sf Reg}({\sf EE}) = 3.38 \times 10^4$. Our {\sf TS} algorithm incurs a much smaller regret than the baseline {\sf EE}.


\begin{figure}[h] 
\centering
\includegraphics[width=0.7\columnwidth]{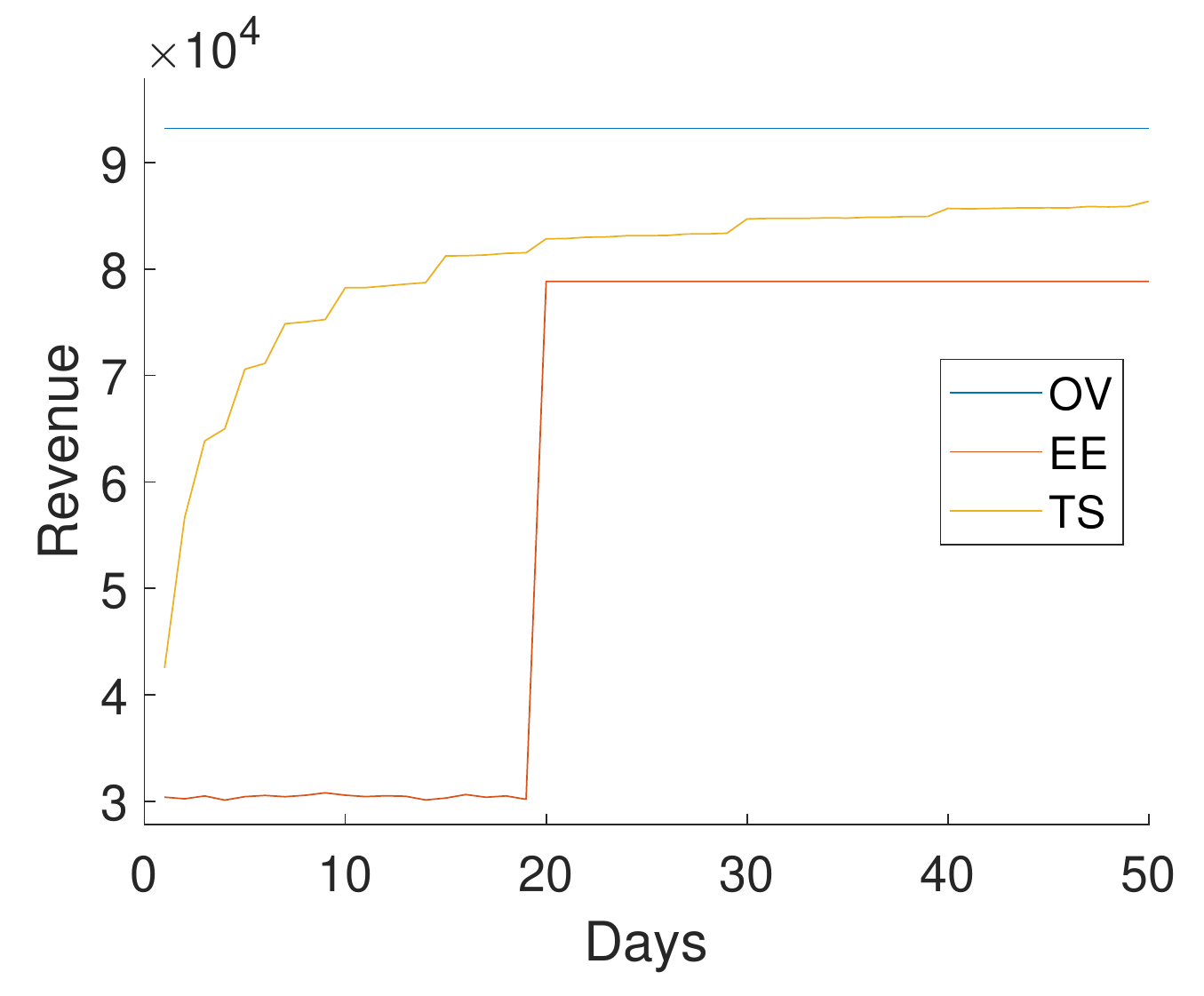} 
\caption{Comparison of learning curves} \label{fig:exp-learning-curve}
\end{figure} 

\textbf{Robustness.}
To evaluate the generalization ability of our algorithm, we modify the following two key parameters in experiments: the number of drivers and the standard deviations of the riders' valuations. We report the experimental results showing that our algorithms still perform well under these different experimental environments.

In Table~\ref{table:drivers}, we modify the number of drivers. In the $50\%$ drivers setting we remove each driver from the system with $50\%$ independent probability and in the $200\%$ drivers setting we duplicate every driver. In Table~\ref{table:sigma}, we modify the variations of the riders' valuations. Compared to the original dataset, we modify the standard deviations of valuations by $0.5$ and $1.5$ times respectively. We see that in all settings, our {\sf TS} algorithm consistently performs better than other baselines.

For {\sf EE} and {\sf TS}, we present the revenue on the 50th day ({\sf Rev}) and average regrets ({\sf Reg}) during the period. Learning curves of experiments with modified parameters are shown in Figures \ref{fig:modified:l}-\ref{fig:modified:r}.


\begin{table}[hbt]
\centering
\begin{tabular}{|c|c|c|c|c|c|c|c|c}
\hline
   \#drivers & \multicolumn{2}{c|}{6655} & \multicolumn{2}{c|}{13411} & \multicolumn{2}{c|}{26822} \\ \hline
   & {\sf Rev} & {\sf Reg} & {\sf Rev} & {\sf Reg} & {\sf Rev} & {\sf Reg} \\ \hline
{\sf OV} & 6.82 & --  & 9.32 &  --  & 11.17 & -- \\ \hline
{\sf FP} &  5.54 &  --  & 7.56 &  --  & 9.02 & -- \\ \hline
{\sf EE} &  5.88 & 2.47  &  7.88 & 3.38   & 9.27 & 4.14 \\ \hline
{\sf TS} &  \bf 6.40 & \bf 0.87 &  \bf 8.64 & \bf 1.29  & \bf 10.25& \bf 1.65  \\ \hline
\end{tabular}
\caption{\small{{\sf Rev}/{\sf Reg} with different numbers of drivers $(\times 10^4)$}.}
\label{table:drivers}
\medskip
\centering
\begin{tabular}{|c|c|c|c|c|c|c|}
\hline
   stddev    & \multicolumn{2}{c|}{$0.5\sigma$} &  \multicolumn{2}{c|}{$1.0\sigma$} &  \multicolumn{2}{c|}{$1.5\sigma$} \\ \hline
   & {\sf Rev} & {\sf Reg} & {\sf Rev} & {\sf Reg} & {\sf Rev} & {\sf Reg} \\ \hline
{\sf OV}  & 10.36 & --  & 9.32 & --   & 8.61 & -- \\ \hline
{\sf FP} &  7.90 & --  & 7.56 & -- &  7.25& --   \\ \hline
{\sf EE} &  8.41 & 4.11  &  7.88 & 3.38 &  7.45 & 2.90 \\ \hline
{\sf TS} &  \bf 9.49 & \bf 1.72 &  \bf 8.64 & \bf 1.29   & \bf 7.98 & \bf 1.11  \\  \hline
\end{tabular}
\caption{\small{{\sf Rev}/{\sf Reg} with modified standard deviations $(\times 10^4)$}.}
\label{table:sigma}
\end{table}

\begin{figure}[h] 
\centering
\includegraphics[width=0.7\columnwidth]{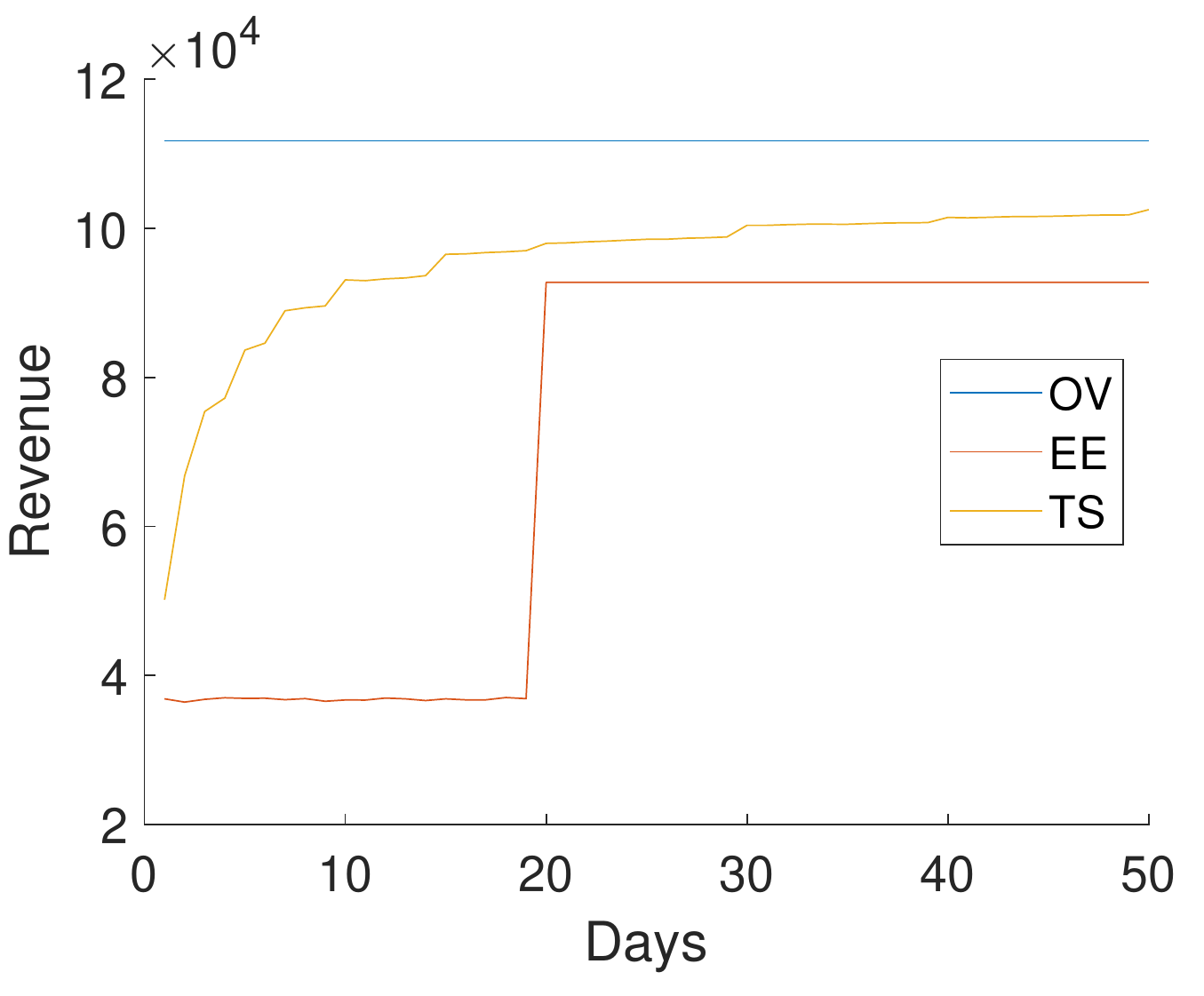} 
\caption{Learning curves with 200\% drivers.} 
\label{fig:modified:l}
\end{figure} 

\begin{figure}[h] 
\centering
\includegraphics[width=0.7\columnwidth]{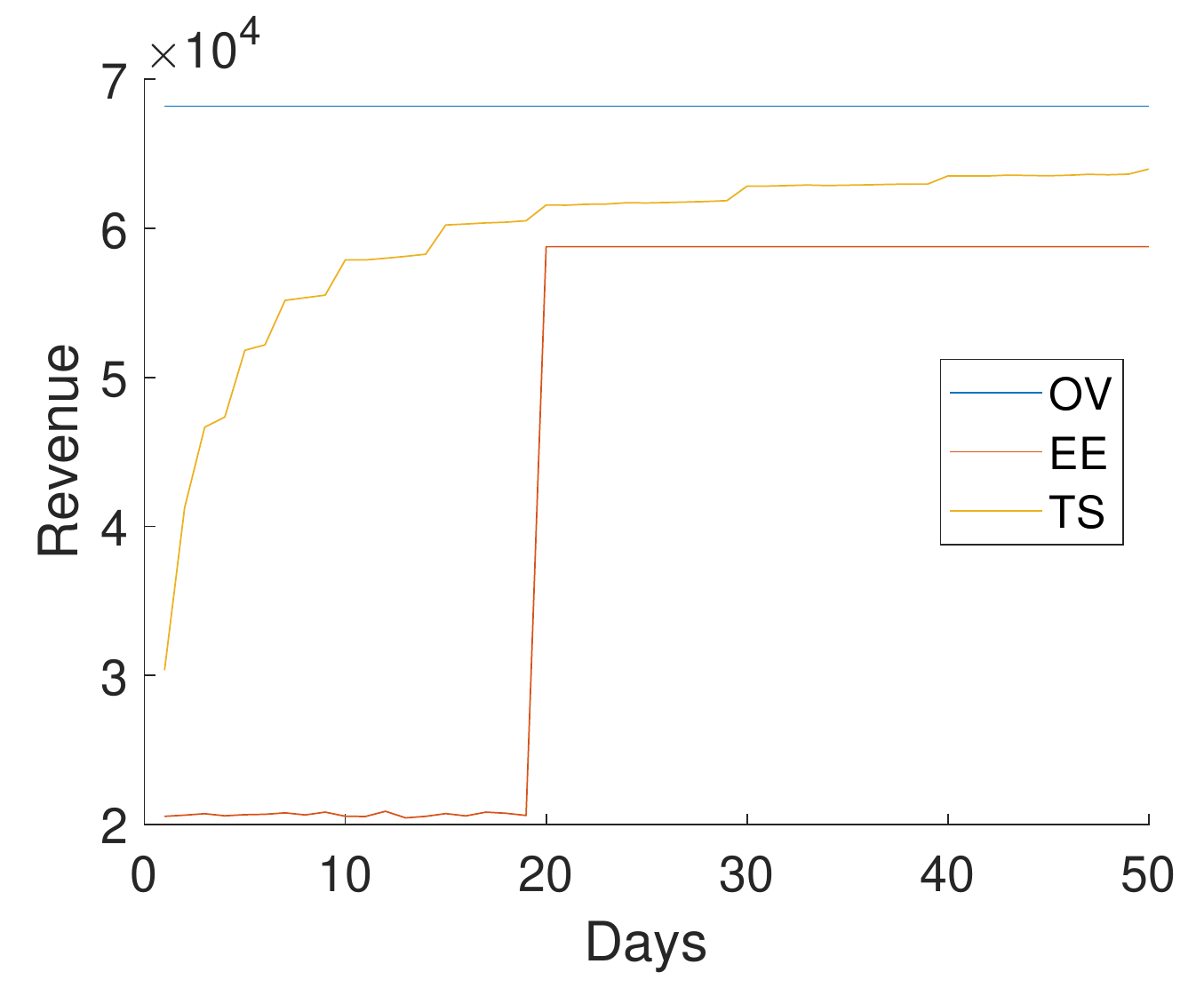} 
\caption{Learning curves with 50\% drivers.} 
\end{figure} 

\begin{figure}[h] 
\centering
\includegraphics[width=0.7\columnwidth]{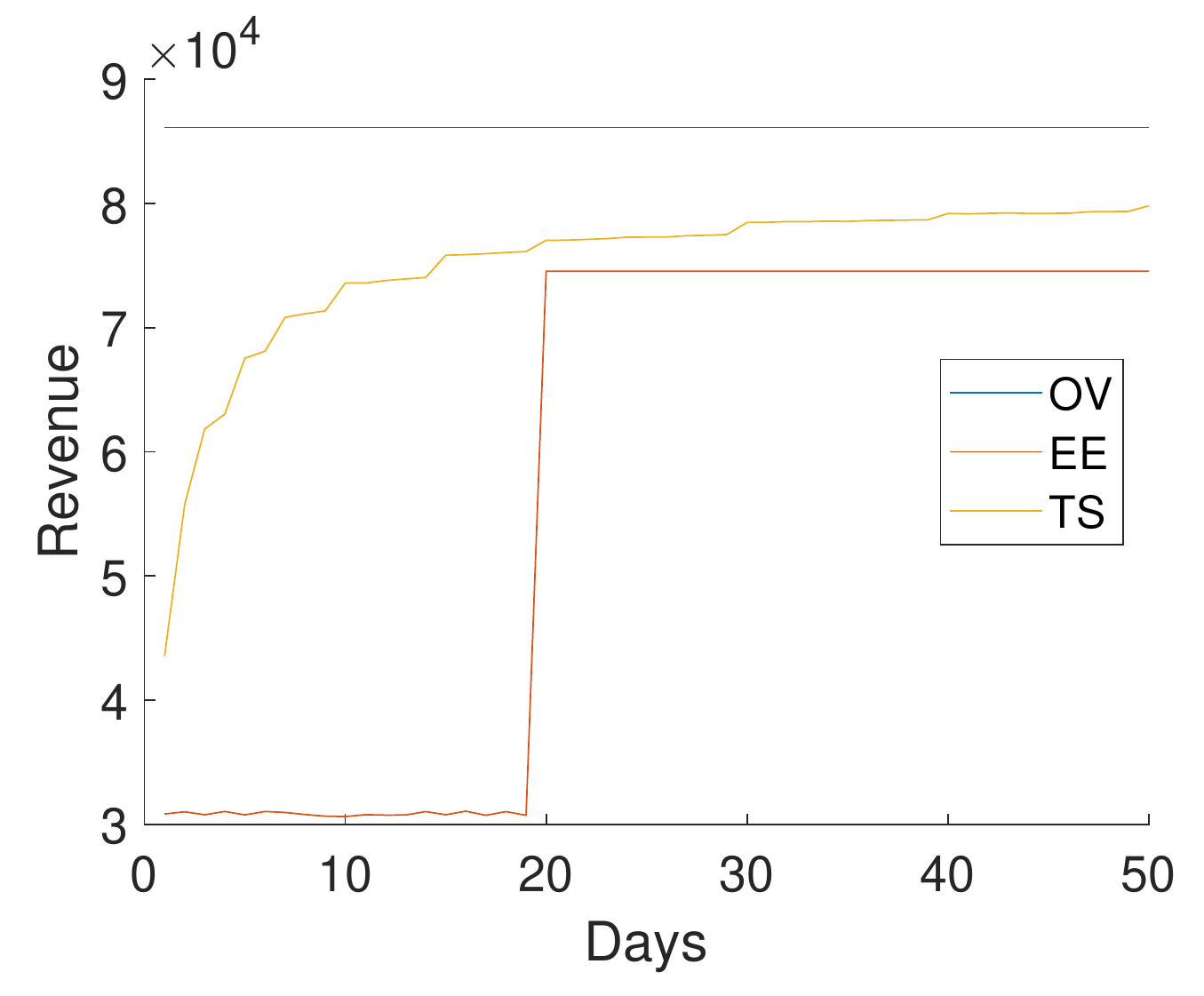} 
\caption{Learning curves with 150\% standard deviations of valuations.} 
\end{figure} 

\begin{figure}[h] 
\centering
\includegraphics[width=0.7\columnwidth]{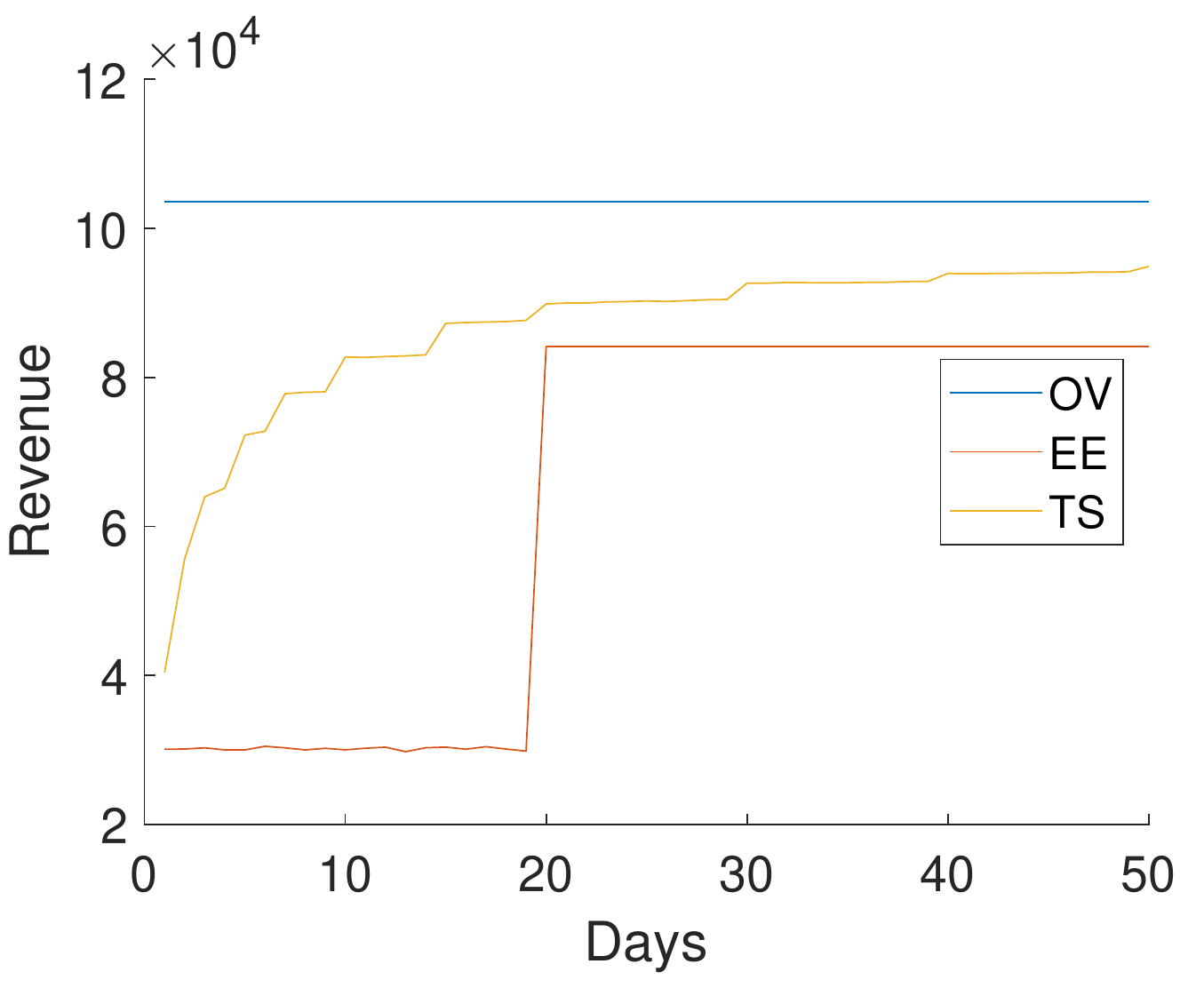} 
\caption{Learning curves with 50\% standard deviations of valuations.}
\label{fig:modified:r}
\end{figure}

\subsection{An Illustrative Example for Fair Re-allocation}\label{app:add-exp-example}

In this part, we show the properties of fair re-allocation for running the Phase 2 algorithm on DiDi dataset in the deterministic setting. Due to the large size of the dataset, we draw a representative subset of the whole dataset to show its behavior, and impose the budget-balance constraint on this subset instead of the whole dataset of rides.

In this example, we consider four positions in Chengdu city in China. Position $A$ is the South Railway Station of Chengdu; position $B$ is Tianfu Square, the leisure and business center located in the center of Chengdu; positions $C$ and $D$ are in two residential districts (Shuangqiaozi and Caojia Alley respectively). We then consider the traces of 10 drivers initiating from $C$, and three consecutive time stamps $1,2,3$ representing the time period of 8:00am to 8:45am. The trip from each position to another takes one time step, but as position $A$ is relatively far from the cluster of $\{B,C,D\}$, trips to or from $A$ typically earn more revenues. Figure~\ref{fig:exp:phase1} shows the numbers of rides and net incomes of each arc from the Maximum Revenue Car Dispatching algorithm.

On the riders' side, riders traveling from or to $A$ are not expected to complain about higher prices, because they do have longer trips. However when it comes to drivers, they may prefer to take longer rides from or to $A$ than traveling among $B,C,D$. Particularly, one driver $J_1$ is assigned the trip $C\to A\to D$ and gains a net income of $6.76$, and another driver $J_2$ is assigned the trip $C\to D\to D$ and gains a net income of $3.71$. Then, $J_2$ may envy $J_1$ for earning more merely because assigned a ``better'' route.

In the same example, after we run the re-allocation algorithm, we re-allocate the money collected from riders to drivers, so that the net utilities for drivers of rides are shown in Figure~\ref{fig:exp:phase2}. In this way, no matter which route is assigned, a driver always gets a total net income of $4.81$ (ignoring rounding errors) within the same total budget, and they cannot improve their net income by deviation, so fairness among drivers are guaranteed while the total revenue is still optimized.


\begin{figure}[hbt] 
\centering
\includegraphics[width=0.85\columnwidth]{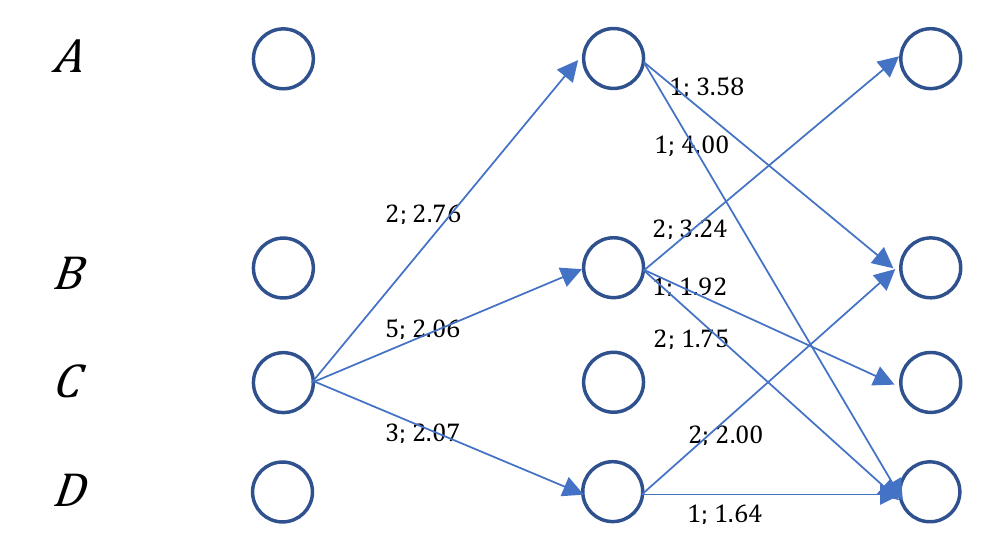} 
\caption{Rider-side pricing for the example in Appendix \ref{app:add-exp-example}} 
\label{fig:exp:phase1}
\end{figure} 

\begin{figure}[hbt] 
\centering
\includegraphics[width=0.85\columnwidth]{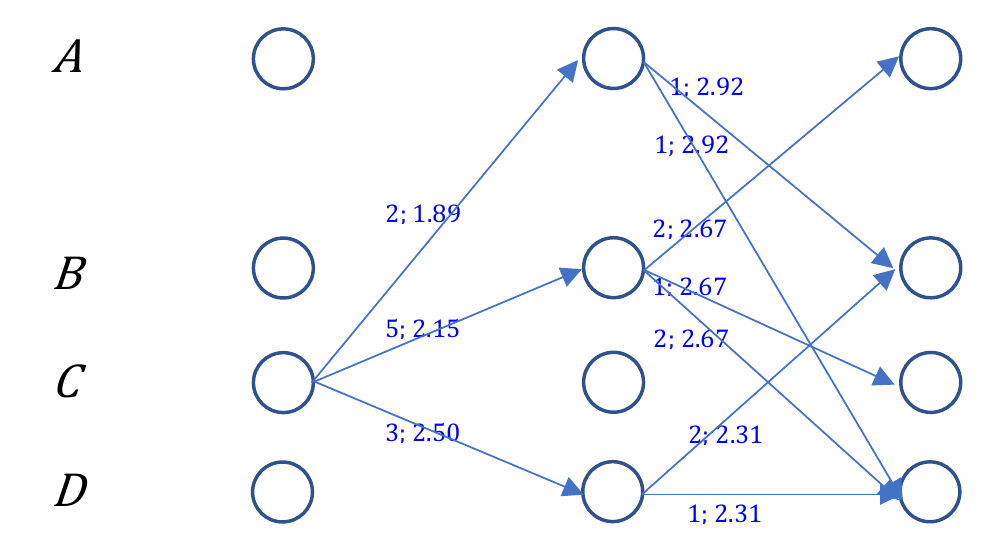} 
\caption{Driver-side re-allocation for the example in Appendix \ref{app:add-exp-example}} 
\label{fig:exp:phase2}
\end{figure} 

\subsection{A simple analysis for influence of number of drivers on unfairness without re-allocation}\label{app:unfairness:drivers}

In fairness evaluation of our experiments, we notice that without the re-allocation phase, the relative unfairness increases with numbers of drivers. Intuitively, when there is only one driver, he/she would just pick the most profitable route; when more drivers join in, if they all choose to pick the most profitable routes for themselves, there may not be enough latent orders for all the routes, and some drivers would have to drive through sub-optimal routes for their income. This phenomenon increases with the number of drivers, which leads to the increase of relative unfairness.

As an simple example, when there are only $5$ latent orders from $A$ to $B,C,D,E,F$, with profits $10,9,8,7,6$ respectively. When there are $2$ drivers initially at $A$, they will be dispatched with $A\to B$ and $A\to C$ trips, and their profits are $10$ and $9$, so the relative unfairness is $0.053$. If there are $5$ drivers instead, then all trips will be taken and there is a wider spread in profits of individual drivers, and the relative unfairness is $0.177$.

However, although it is the general tendency, the relative unfairness is \textbf{not} guaranteed to monotonically increase with the number of drivers. Consider an example in which there are also $45$ latent orders from $A$ to $G$ with profit $2$, then the relative unfairness for $25$ drivers is $0.776$ while the relative unfairness for $50$ drivers is $0.713$. This phenomenon occurs because in the case of $50$ drivers, most drivers can only get the same low profit, so it becomes ``less unfair'' than the case of $25$ drivers.

\end{document}